\documentclass[12pt]{amsart}

\usepackage{amsmath,amssymb,amsfonts,amsthm,amscd,indentfirst}

\textheight 8.5in
\textwidth 6 in
\topmargin 0.0cm
\oddsidemargin 0.5cm \evensidemargin 0.5cm
\parskip 0.0cm

\usepackage{indentfirst}
\usepackage{hyperref}
\usepackage{graphicx}

\usepackage{color}
\usepackage[dvipsnames]{xcolor}

\numberwithin{equation}{section}

\usepackage{setspace}
\usepackage{amsmath, amssymb,amsthm, amsfonts, color}

\newtheorem{thm}{Theorem}[section]
\newtheorem{lma}{Lemma}[section]
\newtheorem{prop}{Proposition}[section]

\theoremstyle{definition}
\newtheorem{definition}{Definition}[section]

\theoremstyle{remark}
\newtheorem{remark}{Remark}[section]

\newcommand{\tr}{\mbox{tr}}

\renewcommand{\div}{\mbox{div}}

\newcommand{\R}{\mathbb R}

\newcommand{\be}{\begin{equation}}
\newcommand{\ee}{\end{equation}}
\newcommand{\bee}{\begin{equation*}}
\newcommand{\eee}{\end{equation*}}
\newcommand{\bal}{\begin{aligned}}
\newcommand{\eal}{\end{aligned}}

\def\p{\partial}

\def\la{\langle}
\def\ra{\rangle}

\def\Pi{\displaystyle{\mathbb{II}}}
\renewcommand\({\left(}

\renewcommand{\o}{O(L^{n-2-2p})}

\def\vh{\vspace{.2cm}}

\def\m{\mathfrak{m}}

\def\F{\mathcal{F}}
\def\E{\mathcal{E}}

\begin{document}

\title{Mass and Riemannian Polyhedra}

\author{Pengzi Miao}
\address[Pengzi Miao]{Department of Mathematics, University of Miami, Coral Gables, FL 33146, USA}
\email{pengzim@math.miami.edu}

\author{Annachiara Piubello}
\address[Annachiara Piubello]{Department of Mathematics, University of Miami, Coral Gables, FL 33146, USA}
\email{piubello@math.miami.edu}

\thanks{P. Miao's research was partially supported by NSF grant DMS-1906423.}

\begin{abstract}
We show that the concept of the ADM mass in general relativity 
can be understood as 
the limit of the total mean curvature plus the total defect of dihedral angle
of the boundary of large Riemannian polyhedra. 
We also express the $n$-dimensional mass as a suitable integral of geometric quantities that determine 
the $(n-1)$-dimensional mass. 
\end{abstract}

\maketitle

\markboth{Pengzi Miao and Annachiara Piubello}{Mass and Riemannian Polyhedra}

\section{Introduction} 
On a complete asymptotically flat manifold with nonnegative scalar curvature, 
the Riemannian positive mass theorem asserts the mass of the manifold is nonnegative,
and is zero if the manifold is isometric to the Euclidean space. In dimension three, 
the theorem was first proved by Schoen and Yau \cite{SchoenYau79}, and by Witten \cite{Witten81}. 
Recently, a new proof was given by Bray,  Kazaras,  Khuri and Stern \cite{BKKS19}.

In \cite{Gromov14}, Gromov suggested a geometric comparison theory for scalar curvature via
the use of Riemannian polyhedra. 
More precisely, given a convex polyhedron $P^n$ in the Euclidean space $ (\R^n, \bar g)$, 
Gromov \cite{Gromov14, Gromov18a} conjectured that,  if $g$ is a Riemannian 
metric on $P$ so that
\begin{itemize}
\item  $g$ has nonnegative scalar curvature, 
\item each face of $\p P$ is weakly mean convex in $(P, g)$, and, 
\item along any edge 
of $\p P$, the dihedral angle of $(P, g)$ is no 
larger than the constant dihedral angle of $P$ in $(\R^n, \bar g)$,
\end{itemize}
then $(P, g)$ is isometric to a Euclidean polyhedron.

Significant progress toward Gromov's conjecture has been obtained by Li \cite{Li17, Li20}.
In dimension three, Li \cite{Li17} proved the conjecture for a large class of polyhedra that includes cubes and 
$3$-simplices. In higher dimensions, Li \cite{Li20} proved the conjecture for a class of 
prisms of dimensions up to $7$.

In this paper, we present a result that ties the concept of mass of asymptotically manifolds to the 
polyhedra comparison theory of Gromov.

\begin{thm} \label{thm-main}
Let $(M^n, g)$ be an asymptotically flat manifold with dimension $ n \ge 3$. 
Let $ \{ P_k \}$ denote a sequence of Euclidean polyhedra in a coordinate chart $\{ x_i \}$ 
that defines the end of $(M, g)$. Suppose $ \{ P_k \}$ satisfy the following conditions
\begin{itemize}
\item[a)]  $ P_k $ encloses the coordinate origin  and $ \lim_{k \to \infty} r_{_{P_k}} = \infty $, where
$$ r_{_{P_k} } = \min_{x \in \p P_k}  | x |  ; $$
\vh
\item[b)] $ | \F (\p P_k)  | = O ( {r^{n-1}_{_{P_k}}} )$, where $\F (\p P_k) $ denotes  the union of all the faces in $\p P_k$, 
and $ | \F ( \p P_k)  | $ is the Euclidean $ (n-1)$-dimensional volume of $ \F (\p P_k) $;
\vh
\item[c)] $ | \E  ( \p P_k)  |  = O ( r^{n-2}_{_{P_k} })$, where $ \E ( \p P_k) $ denotes the union of all the edges in $ \p P_k$, 
and  $| \E ( \p P_k)  | $ is  the Euclidean $(n-2)$-dimensional volume of $ \E ( \p P_k) $; and
\vh
\item[d)] along each edge, the Euclidean dihedral angles $\bar \alpha $ of $P_k$ satisfies 
$$  | \sin \bar \alpha |   \ge c $$ 
for some constant $ c > 0$ that is independent on $k$.
\end{itemize}
Then, as $ k \to \infty$,  the mass of $(M, g)$, which we denote by $ \m (g)$, satisfies 
\be \label{eq-main}
 \m (g)  = \frac{1}{(n-1) \omega_{n-1} } \left( - \int_{\F (\p P_k) } H \, d \sigma + \int_{\E (\p P_k) } ( \alpha - \bar \alpha) \, d \mu \right)  + o(1) . 
\ee
Here $ \omega_{n-1}$ is the volume of the standard $(n-1)$-dimensional  sphere, 
 $ H$ denotes the mean curvature of the faces $\F (\p P_k) $ in $(P_k, g)$, $\alpha$ denotes the dihedral angle along 
the edges $\E (\p P_k) $ of $(P_k, g)$, and $ d \sigma$, $ d \mu $ denote the volume element on $\F (\p P_k) $, $ \E ( \p P_k) $, respectively, with respect to the induced metric from $g$.
\end{thm}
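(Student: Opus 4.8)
The plan is to compute the ADM mass via its standard surface-integral definition over a large coordinate sphere $S_R$, and then deform $S_R$ into $\partial P_k$ while tracking the error. Recall the ADM mass is
\[
\m(g) = \frac{1}{2(n-1)\omega_{n-1}} \lim_{R\to\infty} \int_{S_R} \left( \partial_j g_{ij} - \partial_i g_{jj} \right) \nu^i \, d\sigma_{\bar g}.
\]
I would first recall the known reformulation (due to essentially a divergence-theorem computation comparing the linearized scalar curvature) expressing the mass in terms of mean curvature: for a large star-shaped hypersurface $\Sigma$ enclosing the origin,
\[
\m(g) = \frac{1}{(n-1)\omega_{n-1}} \lim \int_{\Sigma} \left( \bar H - H \right) d\sigma + (\text{lower order}),
\]
where $\bar H$ is the Euclidean mean curvature of $\Sigma$ and $H$ its mean curvature in $g$; this is the Brown–York / Miao–Tam type expression valid because $g_{ij} = \delta_{ij} + O(|x|^{-p})$ with $p > (n-2)/2$. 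The first real step, therefore, is to establish this surface-independence: that for two competing large hypersurfaces the difference of $\int (\bar H - H)\, d\sigma$ goes to zero, using the asymptotic decay and the fact that $\bar H - H$ is itself $O(|x|^{-p-1})$ with the relevant integrals controlled by hypotheses (b).

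**Next** I would confront the central difficulty: $\partial P_k$ is not smooth — it is a union of flat faces $\F(\p P_k)$ meeting along lower-dimensional edges $\E(\p P_k)$. On each face, the Euclidean mean curvature $\bar H$ vanishes identically, so the face contribution reduces to $-\int_{\F(\p P_k)} H \, d\sigma$, which already matches the first term in \eqref{eq-main}. The edges are where the dihedral-angle term must materialize. The key idea is to smooth the corner: replace $\partial P_k$ near each edge by a small rounded hypersurface (a ``collar'' of width $\delta$ around the edge), apply the smooth mean-curvature formula for the mass on this regularized surface, and then let $\delta \to 0$. On the rounded region the Euclidean mean curvature $\bar H$ integrates, by the Gauss–Bonnet/tube-formula mechanism, to a contribution concentrated on the edge proportional to $(\pi - \bar\alpha)$ times the $(n-2)$-volume of the edge — more precisely the defect relative to the flat case — while the $g$-mean-curvature contributes $(\pi - \alpha)$ to the same order since $g$ is $C^0$-close to $\bar g$ there. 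Taking the difference yields exactly $\int_{\E(\p P_k)}(\alpha - \bar\alpha)\, d\mu$ up to terms that vanish with $\delta$ and with $k$.

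**The error analysis** is the remaining bookkeeping. I would show: (i) the rounding region has $(n-1)$-volume $O(\delta \cdot |\E(\p P_k)|) = O(\delta \, r_{_{P_k}}^{n-2})$, so with $\delta$ chosen appropriately (say $\delta = \delta(k) \to 0$ slowly) its direct contribution to the mass integral is negligible; (ii) on the rounded piece, $|H - \bar H| = O(\delta^{-1} r_{_{P_k}}^{-p})$ pointwise because the second fundamental form is $O(\delta^{-1})$ and the metric perturbation is $O(r_{_{P_k}}^{-p})$ with derivative $O(r_{_{P_k}}^{-p-1})$; integrating against the rounded area $O(\delta\, r_{_{P_k}}^{n-2})$ gives $O(r_{_{P_k}}^{n-2-p}) = o(1)$ provided $p > n-2$, and in the borderline range one uses the more careful tube-formula cancellation so that only the angle-defect survives; (iii) hypothesis (d), $|\sin\bar\alpha| \ge c$, guarantees the geometry of the rounded corner stays uniformly controlled — the principal curvatures of the collar do not degenerate worse than $O(\delta^{-1})$ — and also that the comparison between $\alpha$ and $\bar\alpha$ (both bounded away from $0$ and $\pi$) is stable under the $O(r_{_{P_k}}^{-p})$ metric perturbation, with $\alpha - \bar\alpha = O(r_{_{P_k}}^{-p})$ on $\E(\p P_k)$; hypothesis (c) then makes $\int_{\E}(\alpha - \bar\alpha)\,d\mu$ itself finite but generally nonzero, so it must be kept. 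Finally, combining the surface-independence of Step 1 with the limit of the regularized computation gives \eqref{eq-main}.

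**The main obstacle** I anticipate is Step 2's tube-formula computation at the edges in general dimension: making rigorous the claim that rounding a codimension-one corner with interior dihedral angle $\bar\alpha$ along an $(n-2)$-dimensional edge contributes, in the limit of vanishing rounding scale, precisely $\int_{\E}(\pi - \bar\alpha)\,d\mu$ to $\int \bar H\, d\sigma$ — and that the curved-metric version contributes $\int_{\E}(\pi - \alpha)\,d\mu$ with the same $\delta\to 0$ robustness — so that their difference is exactly the stated angle defect with no residual ``rounding energy.'' This requires a clean model computation for the mean-curvature integral of a standard smoothing (e.g. a circular-arc collar) and a uniform estimate showing the answer is insensitive to the profile of the smoothing and to the ambient metric perturbation, which is where hypothesis (d) does the essential work.
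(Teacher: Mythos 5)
Your overall architecture (reduce the mass to a mean-curvature integral over $\p P_k$, with the dihedral term emerging at the edges) aims at the right target, but two of your steps have genuine gaps, and the first is an outright error. The ``known reformulation'' you start from, $\m(g)=\frac{1}{(n-1)\omega_{n-1}}\lim\int_\Sigma(\bar H-H)\,d\sigma$ with $\bar H$ the \emph{Euclidean} mean curvature of $\Sigma$, is false for curved hypersurfaces, and consequently so is your claimed surface-independence of $\int_\Sigma(\bar H-H)$: the expansion of $H-\bar H$ (Proposition \ref{prop-mean-curvature}) contains, besides the ADM integrand and a tangential divergence, the term $-\langle h,\bar A\rangle_{\bar\gamma}$, which on a coordinate sphere $S_R$ contributes at order $R^{n-2-p}$ --- the same order as the mass whenever $p\le n-2$. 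Concretely, for $3$-dimensional Schwarzschild $g=(1+\tfrac{m}{2r})^4\delta$ one computes $\int_{S_R}(\bar H-H)\,d\bar\sigma\to 16\pi m=2\,(n-1)\omega_{n-1}\,\m(g)$, not $8\pi m$; the correct sphere statement is the Brown--York one with $H_0$ the mean curvature of the isometric embedding, not $\bar H$. The identity you want holds on $\p P_k$ precisely because each face is totally geodesic in $(\R^n,\bar g)$, so $\bar A=0$ and this term vanishes identically --- that is part of what must be proved, not an available input. The fix is to invoke instead the surface-independence of the ADM \emph{flux} integral $\int(g_{ij,i}-g_{ii,j})\bar\nu^j\,d\bar\sigma$ over exhaustions satisfying a) and b) (Bartnik), as in \eqref{eq-mass-def-p}, and only then convert that flux on the flat faces.

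Second, the edge analysis. Your rounding/tube-formula scheme is a legitimately different idea from the paper's, but the step you yourself flag as ``the main obstacle'' is the entire difficulty, and your error budget does not close in the relevant range: on the collar $|H-\bar H|=O(\delta^{-1}r_{_{P_k}}^{-p})$ against collar area $O(\delta\, r_{_{P_k}}^{n-2})$ gives $O(r_{_{P_k}}^{n-2-p})$, which is $o(1)$ only if $p>n-2$, whereas the hypothesis is only $p>\tfrac{n-2}{2}$; moreover the collar also carries the $\langle h,\bar A\rangle_{\bar\gamma}=O(\delta^{-1}r_{_{P_k}}^{-p})$ correction of exactly the same size. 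One must exhibit cancellation down to $O(r_{_{P_k}}^{-2p})$ per unit edge volume. The paper gets this precision with no smoothing at all: on each flat face the only non-ADM term is $-\div_{\bar \gamma} X$ with $X$ dual to $h(\bar\nu,\cdot)$, which integrates by parts to $\int_{\p F}g(\bar\nu,\bar n)\,d\bar\mu$; summing the two contributions along a shared edge and Taylor-expanding $\arccos$ to second order (this is where $|\sin\bar\alpha|\ge c$ enters) yields $g(\bar\nu_{_A},\bar n_{_A})+g(\bar\nu_{_B},\bar n_{_B})=2(\bar\alpha-\alpha)+c^{-3}O(r_{_{P_k}}^{-2p})$, which together with condition c) and $p>\tfrac{n-2}{2}$ closes the estimate. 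If you wish to keep the rounding approach, you must carry out the model collar computation to that same $O(r_{_{P_k}}^{-2p})$ precision, uniformly in the smoothing profile, and show the residual ``rounding energy'' vanishes as $\delta\to0$; as written, this is asserted rather than proved.
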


\vspace{.1cm}

\begin{figure}[h]
\centering
\includegraphics[scale=.35]{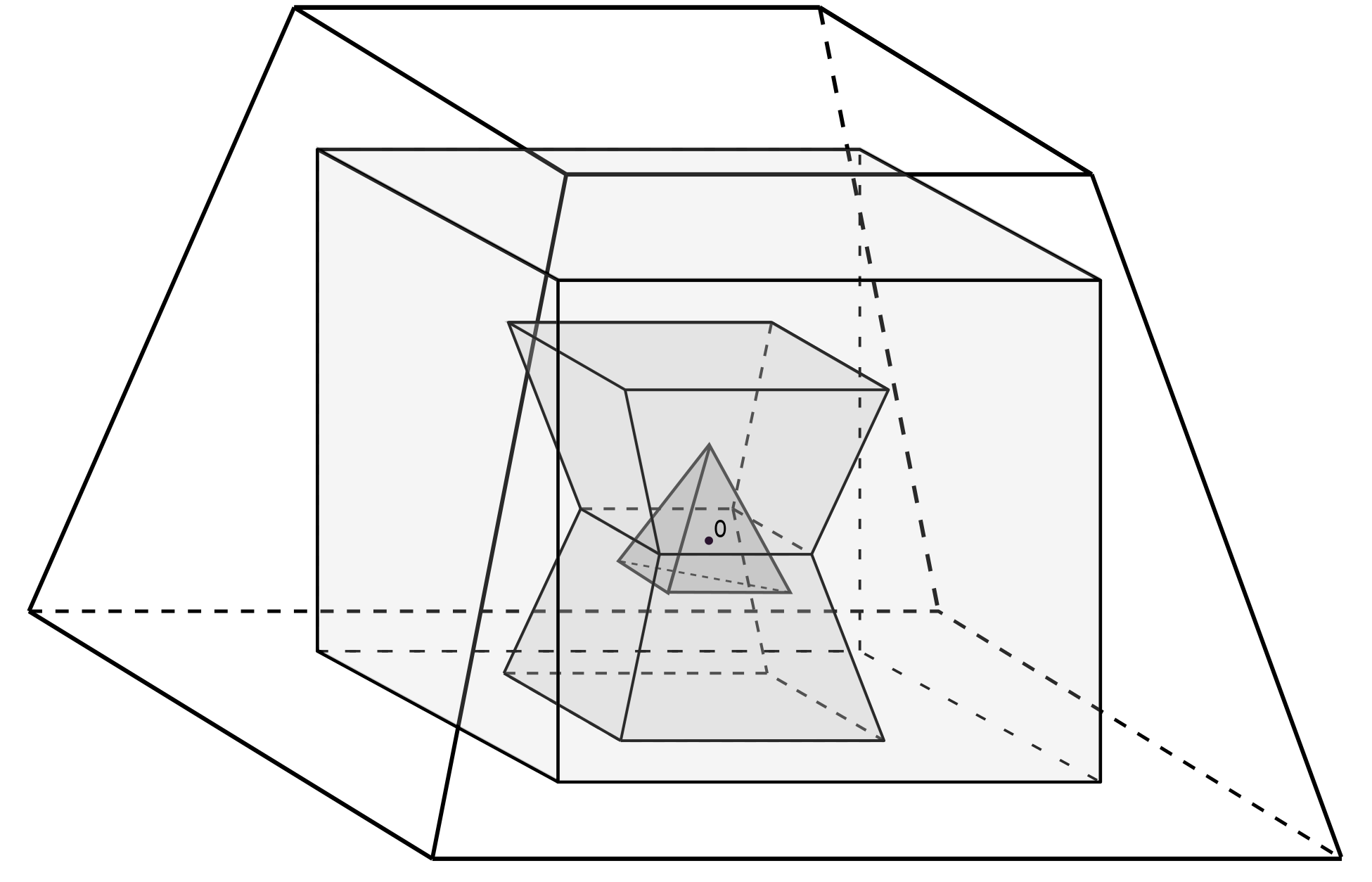}
\caption{A sequence of polyhedra $\{ P_k \}$ approaching $\infty$.}
\label{fig-p}
\end{figure}

We give some remarks about Theorem \ref{thm-main}.

\begin{remark} \label{rem-mix-nonconvexity}
$\{ P_k \}$ is allowed to consist of different type of polyhedra.
Moreover, elements in $\{ P_k \}$ can be non-convex. (See Figure \ref{fig-p}.)
\end{remark}

\begin{remark}
In $3$-dimension, if $\{ P_k \}$ is a family of coordinate cubes, \eqref{eq-main} was one of the formula derived in \cite{Miao19}. 
\end{remark}

\begin{remark}
Given any  polyhedron $P$ enclosing the coordinate origin, 
let $ P_{(r)} $ denote the scaling of $P$ by a large constant $r $ in the coordinate space. The family $\{ P_{(r)} \}$ 
satisfies all conditions a) -- d) above. As a result, formula \eqref{eq-main} holds
for any family of coordinate polyhedron $\{P_{(r)} \}$. 
\end{remark}

\begin{remark}
The angle condition $ | \sin \bar \alpha | \ge c  $ is imposed because of the formula
$ y' (x)  = - \frac{1}{\sin y} $, if $ y = \arccos x $. In the proof of Theorem \ref{thm-main}, we read 
dihedral angles from metric coefficients. A lower bound on $ | \sin \bar \alpha | $ 
serves as a sufficient condition to convey estimates on the metric to estimates on the defect of dihedral angles. 
\end{remark}

We recall the definition of an asymptotically flat manifold and its mass. 

\begin{definition} \label{def-AF}
A Riemannian manifold $(M^n, g) $ is asymptotically flat (with one end)
if there exists a compact set $K $ such that $ M \setminus K $ is diffeomorphic to 
$ \R^n \setminus B_r (0) $, where $ B_r (0)  = \{ |x| < r \} $ for some $ r > 0 $, and
with respect to the standard coordinates $\{ x_i \}$ on $\R^n$, 
the metric $g$ satisfies $ g_{ij} = \delta_{ij} + h_{ij}$, where 
\be \label{eq-g-decay}
h_{ij} =  O ( | x |^{-p} ) , \ \p h_{ij}  =  O ( | x |^{ - p - 1}) , \  
\p \p h_{ij}  =  O ( | x |^{ - p - 2 } )
\ee 
for some $ p > \frac{n-2}{2} $.
Moreover, the scalar curvature of $g$ is integrable on $(M, g)$.
\end{definition}

On an asymptotically flat manifold $ (M^n, g) $, the ADM mass \cite{ADM} is given by 
\be \label{eq-mass-def}
\m (g) = \lim_{ r \rightarrow \infty} \frac{1}{ 2 (n-1) \omega_{n-1} } \int_{ S_r }  (g_{ij,i} - g_{ii,j} ) \, \bar \nu^j \, d \bar \sigma .
\ee
Here $ S_r  = \{ x  \ | \ | x | = r \}$ denotes a coordinate sphere, 
$ \bar \nu $ is the Euclidean  outward unit normal to $ S_r$, $ d \bar \sigma$ is the Euclidean 
volume element on $ S_r$, and summation is applied over repeated indices. 
It was shown by Bartnik \cite{Bartnik86}, and by Chru\'sciel \cite{Chrusciel} independently,  that $\m(g)$
is a geoemtric invariant of $(M, g)$,  independent on coordinates satisfying \eqref{eq-g-decay}.

Often it is convenient to compute the mass  in \eqref{eq-mass-def} with
$ \{ S_r \}$ replaced by $\{ \p D_k\}$, where $\{ D_k \}$ is another suitable exhaustion sequence of $(M, g)$.
By Proposition 4.1 in \cite{Bartnik86} (more specifically, by its proof), it is known  
\be  \label{eq-mass-def-p}
\m (g) = \lim_{ k \rightarrow \infty} \frac{1}{ 2 (n-1) \omega_{n-1} } \int_{ \p P_k }  (g_{ij,i} - g_{ii,j} ) \, \bar \nu^j \, d \bar \sigma .
\ee
Here $\{ P_k \}$ is a sequence of polyhedra satisfying conditions a) and b) in Theorem \ref{thm-main}.

The positive mass theorem in general dimensions was shown by Schoen-Yau \cite{SchoenYau17}.
(Also see the work of Lohkamp \cite{Lo3}).
The following is a corollary of the positive mass theorem and Formula \eqref{thm-main}.

\begin{thm} \label{thm-cor}
Let $(M^n, g)$ be a complete, asymptotically flat manifold with nonnegative scalar curvature.  
Let $ \{ P_k \}$ denote a sequence of Euclidean polyhedra satisfying conditions in Theorem \ref{thm-main}.
Then, for large $k$,
\be \label{eq-main-cor}
- \int_{\F (\p P_k) } H \, d \sigma + \int_{\E (\p P_k) } ( \alpha - \bar \alpha) \, d \mu \ge 0 .
\ee
In particular, for any fixed Euclidean polyhedron  $ P$, 
\be
- \int_{\F_{(r) }} H \, d \sigma + \int_{\E_{(r) }} ( \alpha - \bar \alpha) \, d \mu \ge 0 ,
\ee
where $\F_{(r)}$ and $ \E_{(r)}$ are the faces and edges of the polyhedron $P_{(r)}$ obtained by 
scaling $P$ by a large constant factor $r$.
\end{thm}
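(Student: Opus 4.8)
The plan is to feed the positive mass theorem into the asymptotic identity \eqref{eq-main}. First I would invoke the positive mass theorem in arbitrary dimensions --- Schoen--Yau \cite{SchoenYau17}, with the alternative approach of Lohkamp \cite{Lo3} --- which, since $(M^n,g)$ is complete, asymptotically flat, and of nonnegative scalar curvature, gives $\m(g)\ge 0$. Writing $\mathcal{Q}_k := -\int_{\F(\p P_k)}H\,d\sigma+\int_{\E(\p P_k)}(\alpha-\bar\alpha)\,d\mu$ for the quantity appearing in \eqref{eq-main-cor}, and noting that the given family $\{P_k\}$ satisfies hypotheses a)--d), Theorem \ref{thm-main} yields $\mathcal{Q}_k = (n-1)\,\omega_{n-1}\,\m(g) + o(1)$ as $k\to\infty$.

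Next I would split into cases. If $\m(g)>0$, then $(n-1)\,\omega_{n-1}\,\m(g)$ is a fixed positive number while the $o(1)$ term tends to $0$, so $\mathcal{Q}_k>0$ --- hence certainly $\mathcal{Q}_k\ge 0$ --- for all sufficiently large $k$, which is exactly \eqref{eq-main-cor}. If $\m(g)=0$, I would appeal to the rigidity statement of the positive mass theorem \cite{SchoenYau17}, according to which $(M,g)$ is isometric to Euclidean space; in that flat case \eqref{eq-main} only says $\mathcal{Q}_k\to 0$, so one has to confirm \eqref{eq-main-cor} by hand, e.g.\ by passing to coordinates in which the metric is standard and controlling the induced mean curvature of, and the dihedral defect along, the (now explicitly describable) boundary pieces of $P_k$.

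Finally, for the concluding assertion I would specialize the first part: given a fixed Euclidean polyhedron $P$ enclosing the coordinate origin, the dilated family $\{P_{(r)}\}$ satisfies conditions a)--d) of Theorem \ref{thm-main}, with $r\to\infty$ in the role of $k\to\infty$, as observed in one of the remarks following Theorem \ref{thm-main}; applying the inequality just proved to $\{P_{(r)}\}$ gives $-\int_{\F_{(r)}}H\,d\sigma+\int_{\E_{(r)}}(\alpha-\bar\alpha)\,d\mu\ge 0$ for all sufficiently large $r$.

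The step I expect to be the genuine obstacle is the borderline case $\m(g)=0$: away from it the corollary is little more than a rephrasing of \eqref{eq-main}, but in the equality case \eqref{eq-main} delivers no sign on its own, and one must combine the rigidity of the positive mass theorem with a direct argument showing that the total mean curvature plus total dihedral defect of $\p P_k$ is asymptotically nonnegative on the flat manifold. Everything else in the proof is bookkeeping: identifying $\mathcal{Q}_k$ with the left-hand side of \eqref{eq-main-cor} and checking that the hypotheses a)--d) are inherited by the relevant families.
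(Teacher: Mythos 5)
Your route is the paper's route: the paper offers no separate proof of this theorem, presenting it as an immediate corollary of the positive mass theorem ($\m(g)\ge 0$ for a complete asymptotically flat $(M,g)$ with nonnegative scalar curvature) together with formula \eqref{eq-main} --- exactly your first two steps --- and the ``in particular'' clause is just the specialization to the dilated family $\{P_{(r)}\}$, as in the remark following Theorem \ref{thm-main}. For $\m(g)>0$ your argument is complete.

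The genuine gap is the one you flagged yourself, and your proposed patch does not close it. If $\m(g)=0$, rigidity does give an isometry of $(M,g)$ with $(\R^n,\bar g)$, but the polyhedra $P_k$, the reference angles $\bar\alpha$, and the background metric $\bar g$ are all defined in the \emph{given} asymptotically flat chart $\{x_i\}$, which need not be a chart in which $g_{ij}=\delta_{ij}$; it is only asymptotic to one. In such a chart the faces of $P_k$ are not totally geodesic for $g$ and $\alpha\ne\bar\alpha$ in general, so ``passing to coordinates in which the metric is standard'' does not make the integrands vanish: the quantity in \eqref{eq-main-cor} agrees with the flux integral in \eqref{eq-mass-def-p} only up to quadratic-in-$h$ error terms, which carry no sign, and hence is an $o(1)$ quantity of undetermined sign for each finite $k$. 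What the argument honestly yields is that the left-hand side of \eqref{eq-main-cor} has nonnegative $\liminf$ as $k\to\infty$ (equivalently, it is $\ge -\epsilon$ for large $k$), with strict positivity for large $k$ whenever $\m(g)>0$. The paper's own statement carries the same caveat, since it too reads the conclusion directly off \eqref{eq-main} and $\m(g)\ge 0$ without treating the zero-mass case; so your contribution here is to have located the soft spot, not to have repaired it.
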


The polyhedra $P_{k}$, $P $ in Theorem \ref{thm-cor} do not need to be convex (see Remark \ref{rem-mix-nonconvexity}). 
In this sense, Theorem \ref{thm-cor} indicates that
Gromov's comparison theory of scalar curvature might extend to non-convex Euclidean polyhedra.

In \cite{Miao19}, $\{ P_k \}$ was chosen as a family of coordinate cubes in $3$-dimension, 
and a formula that represents the $3$-dimensional 
mass via suitable integration of the angle defect, measured by the boundary term in the Gauss-Bonnet theorem, 
was obtained. 
In higher dimensions, by taking $\{ P_k \}$ as coordinate cubes, 
we give an induction-type formula
that evaluates the $n$-dimensional mass via quantities that determine the $(n-1)$-dimensional mass. 

\begin{thm}\label{thm-prop-intro}
Let $(M^n, g)$ be an asymptotically flat manifold with dimension $ n \ge 4$.
Given any $k \in \{ 1, \cdots, n \}$,  any large constant $L$, and any $ t \in [-L, L]$,  
there is a quantity $ \m_k^{(n-1)} (t, L)$, associated to the coordinate hyperplane 
$\{ x_k = t \}$, such that 
\be
\m ( g|_{  \{ x_k = t \} } ) = \lim_{ L \to \infty } \m_k^{(n-1)} (t, L) ,
\ee
and
\begin{equation}\label{eq-slice-intro}
  \m (g)=  \frac{ \omega_{n-2} }{ (n-1) \omega_{n-1} }   \lim_{ L \to \infty} \sum_{k=1}^n \int_{-L}^L \m^{(n-1)}_k ( t, L ) \, dt .
\end{equation}
\end{thm}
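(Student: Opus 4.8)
The plan is to specialize Theorem \ref{thm-main} to the family of coordinate cubes $P_L = [-L, L]^n$ and then reorganize the resulting boundary integral slice by slice. Applying \eqref{eq-main} with $P_k = P_L$, the mass $\m(g)$ equals, up to $o(1)$, the normalized sum of $-\int_{\F(\p P_L)} H\, d\sigma$ over the $2n$ facets and $\int_{\E(\p P_L)} (\alpha - \bar\alpha)\, d\mu$ over the edges, where each facet lies in a hyperplane $\{x_k = \pm L\}$ and each edge is parallel to a coordinate axis. The first step is therefore to group the facet and edge contributions according to which coordinate direction is "distinguished," and to recognize the two facets $\{x_k = \pm L\}$ together with the codimension-two faces they carry as sweeping out, under the foliation $\{x_k = t\}$, a family of $(n-1)$-dimensional coordinate cubes in the slices.

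The second step is to define $\m_k^{(n-1)}(t, L)$. Fix $k$ and $t \in [-L, L]$. The slice $\{x_k = t\} \cap P_L$ is an $(n-1)$-dimensional coordinate cube, and $g$ restricts to an asymptotically flat metric on the hyperplane $\{x_k = t\}$ (the decay conditions \eqref{eq-g-decay} are inherited by the restriction, and the dimension drops by one so the exponent condition $p > \frac{n-2}{2}$ still comfortably holds, giving $p > \frac{(n-1)-2}{2}$). I would define $\m_k^{(n-1)}(t, L)$ to be precisely the expression that Theorem \ref{thm-main}, applied in dimension $n-1$ to $g|_{\{x_k = t\}}$ with the cube of size $L$, produces before taking $L \to \infty$ — that is, the normalized combination of the total mean curvature of the facets and the total defect of dihedral angles of the edges of that $(n-1)$-cube, as measured in $g|_{\{x_k=t\}}$. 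With this definition, $\lim_{L\to\infty}\m_k^{(n-1)}(t,L) = \m(g|_{\{x_k=t\}})$ is immediate from Theorem \ref{thm-main} in dimension $n-1$, provided one checks conditions a)–d) for the $(n-1)$-dimensional cubes, which is routine since $r_{P_L} \sim L$, the facet and edge volumes scale with the correct powers of $L$, and the Euclidean dihedral angles of a cube are all $\pi/2$ so $|\sin\bar\alpha| = 1$.

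The third and central step is the identity \eqref{eq-slice-intro}, which must be obtained by a Fubini-type argument: integrating $\m_k^{(n-1)}(t, L)$ over $t \in [-L, L]$ should reproduce, up to the normalization constant $\omega_{n-2}/((n-1)\omega_{n-1})$ versus $1/((n-2)\omega_{n-2})$ and up to $o(1)$, the portion of the $n$-dimensional boundary integral \eqref{eq-mass-def-p} associated with the direction $x_k$. Concretely, I expect to express both $\m_k^{(n-1)}(t,L)$ (via the dimension-$(n-1)$ version of the divergence-form identity \eqref{eq-mass-def-p}) and the relevant slabs of the dimension-$n$ integral as integrals of the same combination $g_{ij,i} - g_{ii,j}$ of metric derivatives, then match them by integrating in $t$. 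Summing over $k = 1, \dots, n$ and sending $L \to \infty$ should then give \eqref{eq-slice-intro} from \eqref{eq-mass-def-p}.

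The main obstacle I anticipate is the bookkeeping of which terms in $\int_{\p P_L} (g_{ij,i} - g_{ii,j})\,\bar\nu^j\, d\bar\sigma$ get attributed to which direction $k$, and reconciling the two normalization constants: the factor $\omega_{n-2}/((n-1)\omega_{n-1})$ in \eqref{eq-slice-intro} must emerge from combining the dimension-$n$ normalization $1/(2(n-1)\omega_{n-1})$ with the dimension-$(n-1)$ normalization $1/(2(n-2)\omega_{n-2})$ implicit in $\m_k^{(n-1)}$, together with the count of how many slices see each boundary term. A clean way to handle this is to avoid the geometric (mean curvature / angle) form entirely in the proof of \eqref{eq-slice-intro} and work purely with the coordinate expression $g_{ij,i} - g_{ii,j}$: Theorem \ref{thm-main} guarantees the geometric and coordinate forms agree asymptotically, so it suffices to prove the slice identity for the coordinate form, where Fubini is transparent. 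The error terms are controlled uniformly because all the relevant faces lie at Euclidean distance comparable to $L$ from the origin, so the $O(|x|^{-p-1})$, $O(|x|^{-p-2})$ decay of the first and second derivatives of $h_{ij}$ makes every discarded term $o(1)$ after the $t$-integration, using $p > \frac{n-2}{2}$.
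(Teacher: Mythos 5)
Your proposal is correct in outline and reaches the same result, but it routes the central step differently from the paper. The paper defines $\m_k^{(n-1)}(t,L)$ exactly as you do (equation \eqref{eq-def-mktl}), gets the limit statement from Theorem \ref{thm-main} in dimension $n-1$ as you do, but proves the slice identity \eqref{eq-slice-intro} by staying at the level of geometric data: its key Lemma \ref{lem} shows that along a face $F^{(i)}_\pm$ the slice mean curvatures satisfy $\sum_{k\ne i}\tilde H^{(k)}_i=(n-2)H_i+O(L^{-2p-1})$ and that the slice dihedral angles satisfy $\tilde\alpha^{(k)}=\alpha+O(L^{-2p})$, after which Fubini on the geometric integrals gives \eqref{eq-int-H-2} and \eqref{eq-int-angle}. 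You instead push both the $n$-dimensional and the $(n-1)$-dimensional quantities down to the flux integrand $g_{ij,i}-g_{ii,j}$ via Proposition \ref{prop-main} and do Fubini there. Both routes work, and in fact they hinge on the identical combinatorial fact: the ``bookkeeping'' you defer is precisely the double-sum interchange $\sum_{k\ne i}\sum_{a\notin\{i,k\}}=(n-2)\sum_{a\ne i}$, which says each face term of the ambient integrand is seen by exactly $n-2$ of the $n$ slicing directions; this factor $n-2$ cancels the $(n-2)$ in the lower-dimensional normalization $\tfrac{1}{2(n-2)\omega_{n-2}}$ and produces the constant $\tfrac{\omega_{n-2}}{(n-1)\omega_{n-1}}$. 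Your version is arguably shorter since it avoids computing slice second fundamental forms and slice angles, but it requires the quantitative error bounds of Proposition \ref{prop-main} (not just the $o(1)$ statement of Theorem \ref{thm-main}) applied in dimension $n-1$ uniformly in $t$, so that the per-slice error $O(L^{\,n-3-2p})$ survives the extra factor of $L$ from the $t$-integration; you correctly note this and that $p>\tfrac{n-2}{2}$ makes it $o(1)$. The paper's version keeps the geometric meaning visible, exhibiting each ambient mean curvature as an average of slice mean curvatures. To turn your sketch into a complete proof you would only need to write out the double-sum interchange explicitly; there is no step that fails.
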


We provide the precise definition of  $ \m_k^{(n-1)} (t, L)$ and the proof of Theorem \ref{thm-prop-intro} 
in Section \ref{sec-cube}. 

\section{Difference of the mean curvature}

In this section, we derive a formula on the difference of the mean curvatures of a hypersurface 
with respect to two metrics that are close. The formula will be used in the next section to prove
Theorem \ref{thm-main}.

\vh

\noindent {\bf Setting}:  Let $ \Sigma^{n-1}$ be a hypersurface in a manifold $U^n$. Let $ \bar g$, $g$ be two Riemannian 
metrics on $U$. We view $\bar g$ as a background metric. 
Let $\bar \gamma$, $ \gamma$ be the metrics on $ \Sigma$ induced from $\bar g$, $ g$, respectively. 
Let $ \bar \nu$, $ \nu$ denote the unit normal vectors to $ \Sigma$ in $(M, \bar g)$, $(M, g)$, respectively,  so that they 
point to the same side of $\Sigma$. 
Let $ \bar A$, $ \bar H $ and $ A$, $ H$ be the second fundamental form, the mean curvature of $\Sigma$
with respect to $ \bar \nu$ and $ \nu$, in $(M, \bar g)$ and $(M, g)$, respectively.  

\vh 

\noindent {\bf Assumption}:  We write $ g = \bar g + h $, and assume 
\be \label{eq-condition-small}
 | h |_{\bar g} < \epsilon (n)  \ \text{in}  \ U . 
\ee
Here $ \epsilon (n)  < \frac{1}{n-1}$ is a positive constant that depends only on $n$, and 
$ | \cdot |_{\bar g}$ denotes the norm of a tensor with respect to $\bar g$.

\vspace{.1cm}

\begin{prop} \label{prop-mean-curvature}
The difference of the two mean curvatures satisfies 
\be \label{eq-H-bar-H-prop}
\begin{split}
2 ( H - \bar H ) 
= & \ ( d \, \tr_{\bar g} \, h - \div_{\bar g} \, h ) (\bar \nu) - \div_{\bar \gamma} X - \la h, \bar A \ra_{\bar \gamma}  \\
& \  + | \bar A |_{\bar g} \, O ( | h |^2_{\bar g} ) + O ( | \bar D h |_{\bar g} \, | h |_{\bar g} ).
\end{split}
\ee
Here $ \div_{\bar g} (\cdot)$ and $ \tr_{\bar g} (\cdot) $ denote the divergence and trace on $(U, \bar g)$;
$ \div_{\bar \gamma} (\cdot) $ denotes the divergence on $(\Sigma, \bar \gamma)$;
 $X $ is the vector field on $ \Sigma$ that is dual to the $1$-form $h(\bar \nu, \cdot) $ with respect to  $\bar \gamma$. 
Given two functions $f$ and $\phi$, we write $ f =  O ( \phi )$ to denote $ | f  | \le C | \phi | $ with a constant $ C $ that depends 
only on $n$.
\end{prop}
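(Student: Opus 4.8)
The plan is to compute both mean curvatures directly from the defining formula $H = \operatorname{div}_{(U,\cdot)}(\nu) - \langle \nabla_\nu \nu, \nu\rangle$, or equivalently from the first variation of area, and then subtract. First I would set up a convenient local frame: pick coordinates (or a local frame $\{e_i\}_{i=1}^{n-1}$ on $\Sigma$) adapted to $\bar g$, so that $\bar\gamma_{ij} = \delta_{ij}$ at the point under consideration and $\bar\nu$ is the unit $\bar g$-normal. The unit $g$-normal $\nu$ differs from $\bar\nu$ by a correction determined by $h$: writing $\nu = a(\bar\nu + W)$ where $W$ is tangent to $\Sigma$ and $a$ is a normalizing scalar, the conditions $g(\nu, e_i) = 0$ and $g(\nu,\nu)=1$ give $W = -X + O(|h|^2_{\bar g})$ (here $X$ is the $\bar\gamma$-dual of $h(\bar\nu,\cdot)$ as in the statement) and $a = 1 - \tfrac12 h(\bar\nu,\bar\nu) + O(|h|^2_{\bar g})$. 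This is the place where the smallness assumption \eqref{eq-condition-small} is used, to guarantee invertibility of $g$ restricted to $\Sigma$ and to control the Neumann series defining $a$ and $W$.

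Next I would expand $H$ using the trace formula $H = \gamma^{ij}(\nabla^g_{e_i} e_j)\cdot \nu$ against the $g$-metric, or more cleanly via $2H = (\mathcal{L}_\nu g)(e_i,e_j)\gamma^{ij}$ — i.e. $H$ is half the trace over $\Sigma$ of the Lie derivative of $g$ along the normal. Write $\nu = \bar\nu + V$ with $V = -X + (\text{scalar})\bar\nu + O(|h|^2)$, and $g = \bar g + h$, and linearize: $\mathcal{L}_\nu g = \mathcal{L}_{\bar\nu}\bar g + \mathcal{L}_{\bar\nu} h + \mathcal{L}_V \bar g + (\text{quadratic in } h, V)$. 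Tracing over $\Sigma$ with $\gamma^{ij} = \bar\gamma^{ij} - h^{ij} + O(|h|^2)$ produces: from $\mathcal{L}_{\bar\nu}\bar g$ traced against $\bar\gamma^{ij}$ the term $2\bar H$; from $\mathcal{L}_{\bar\nu}\bar g$ traced against $-h^{ij}$ the term $-\langle h, \bar A\rangle_{\bar\gamma}$; from $\mathcal{L}_{\bar\nu} h$ traced against $\bar\gamma^{ij}$ the normal-derivative-of-$h$ terms that reorganize into $(d\operatorname{tr}_{\bar g} h - \operatorname{div}_{\bar g} h)(\bar\nu)$ up to tangential divergences and curvature-free lower order; from $\mathcal{L}_V\bar g$ traced against $\bar\gamma^{ij}$, since $V$'s tangential part is $-X$, the term $-\operatorname{div}_{\bar\gamma} X$ (the normal component of $V$ contributes to the $\bar A$-coupling and to $O(|\bar A|\,|h|^2)$); and everything else is absorbed into $|\bar A|_{\bar g}\,O(|h|^2_{\bar g})$ and $O(|\bar D h|_{\bar g}\,|h|_{\bar g})$. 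Careful bookkeeping of which second-derivative-of-$h$ terms are "tangential total derivatives" versus "normal derivatives" is what converts the raw expansion into the stated clean form; the identity $(d\operatorname{tr}_{\bar g}h - \operatorname{div}_{\bar g}h)(\bar\nu) = \bar\nu^j(h_{kk,j} - h_{jk,k})$ in $\bar g$-normal coordinates is the target, matching the structure of the ADM integrand in \eqref{eq-mass-def}.

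I would organize the computation so that all differentiations are taken with respect to the \emph{background} connection $\bar D$, pushing every occurrence of the $g$-connection onto $\bar D$ plus Christoffel-difference terms $\bar D h * \bar g^{-1}$, each of which is $O(|\bar D h|_{\bar g})$ and, when multiplied by the remaining factor of $h$ or $V$, lands in the error $O(|\bar D h|_{\bar g}\,|h|_{\bar g})$. Terms with no derivative on $h$ but a factor of $\bar A$ (coming from $\bar\nabla\bar\nu$) and two factors of $h$ give the $|\bar A|_{\bar g}\,O(|h|^2_{\bar g})$ error; terms with $\bar A$ and one factor of $h$ with no derivative give exactly $-\langle h,\bar A\rangle_{\bar\gamma}$ plus the genuinely error-type $|\bar A|_{\bar g} O(|h|^2)$ pieces.

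The main obstacle I anticipate is purely organizational rather than conceptual: correctly separating, among the many terms involving one derivative of $h$, those that assemble into the two "good" divergence-type expressions $(d\operatorname{tr}_{\bar g}h - \operatorname{div}_{\bar g}h)(\bar\nu)$ and $-\operatorname{div}_{\bar\gamma}X$ from those that are merely $O(|\bar D h|_{\bar g}\,|h|_{\bar g})$ — in particular, making sure the tangential derivatives of the tangential part of $\nu$ (i.e. of $X$) really do close up into an intrinsic $\bar\gamma$-divergence and are not contaminated by normal-derivative contributions, and that no term linear in $\bar D h$ with no compensating factor of $h$ is left outside the two named divergence expressions. A clean way to control this is to work at a point with $\bar g$-geodesic normal coordinates for which additionally $\Sigma$ is locally a coordinate slice and $\bar\gamma$ is normal at that point, so that $\bar D$-Christoffels vanish there and the remaining expansion is manifestly the claimed one up to the stated error orders; the tensorial nature of \eqref{eq-H-bar-H-prop} then upgrades the pointwise identity to the general statement.
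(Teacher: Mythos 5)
Your plan is sound and would produce the stated formula, but it takes a genuinely different route from the paper. The paper works directly with $A_{\alpha\beta}=-\la D_{\p_\alpha}\p_\beta,\nu\ra_g$: it first proves the expansions of $\gamma^{-1}$, $g^{-1}$ and $\nu-\bar\nu$ (your ``$a$ and $W$'' step, identical in content to Lemma \ref{lem-0-order-geometry}), then expands the Christoffel-difference tensor $T=D-\bar D$ and obtains the intermediate identity \eqref{eq-H-M03} for $H-\bar H$, and only at the very end converts $(\div_{\bar g}h)(\bar\nu)$ into $\div_{\bar\gamma}X$ plus $\bar A$-couplings via the identity \eqref{eq-identity-MT}. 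You instead linearize $2H=\tr_\gamma(\mathcal{L}_\nu g)$ through the exact splitting $\mathcal{L}_{\bar\nu+V}(\bar g+h)=\mathcal{L}_{\bar\nu}\bar g+\mathcal{L}_{\bar\nu}h+\mathcal{L}_V\bar g+\mathcal{L}_Vh$, which avoids computing $T^k_{ij}$ and makes the first-variation structure transparent; the paper's route is more pedestrian but tracks each error term explicitly. One caution: your term-by-term attributions are off by factors of $2$ as written. Tracing $\mathcal{L}_{\bar\nu}\bar g=2\bar A$ against $\gamma^{\alpha\beta}-\bar\gamma^{\alpha\beta}$ gives $-2\la h,\bar A\ra_{\bar\gamma}$, not $-\la h,\bar A\ra_{\bar\gamma}$; tracing $\mathcal{L}_{-X}\bar g$ gives $-2\div_{\bar\gamma}X$; and $\tr_{\bar\gamma}(\mathcal{L}_{\bar\nu}h)$ produces an extra $+2\la h,\bar A\ra_{\bar\gamma}$ from the Weingarten terms $h(\bar D_{\p_\alpha}\bar\nu,\p_\beta)$. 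These do recombine to the stated coefficients, but only after invoking precisely the identity \eqref{eq-identity-MT} (equivalently, $\div_{\bar\gamma}X=\sum_\alpha h_{n\alpha;\alpha}-\bar H h_{nn}+\la h,\bar A\ra_{\bar\gamma}$ in your adapted frame) to trade one copy of $\div_{\bar\gamma}X$ for $-(\div_{\bar g}h)(\bar\nu)+h_{nn;n}$; so that identity is not optional bookkeeping but the pivot of the argument in either approach, and you should state and prove it explicitly. With that supplied, and with the $O(|h|^2)$ part of $V$ differentiated covariantly so its contribution lands in $|\bar A|_{\bar g}\,O(|h|^2_{\bar g})+O(|\bar Dh|_{\bar g}|h|_{\bar g})$, your argument closes.
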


We remark that a linearized version of \eqref{eq-H-bar-H-prop} can be found  in \cite[Equation (34)]{MT09}.
Below we verify \eqref{eq-H-bar-H-prop}. 
Given any $ p \in \Sigma$, let $\{ x_i \}_{1\le i \le n} $ be local coordinates around $p$ in $M$ 
such that $ \{ x_\alpha \}_{ 1 \le \alpha \le n-1}$ are coordinates near $p$ on $\Sigma$.
By definition, 
\be \label{eq-H-bar-H-0}
\begin{split}
H - \bar H = & \ \gamma^{\alpha \beta} A_{\alpha \beta} - \bar \gamma^{\alpha \beta} \bar A_{\alpha \beta} \\
= & \ \bar \gamma^{\alpha \beta} ( A_{\alpha \beta} - \bar A_{\alpha \beta} ) 
+  (  \gamma^{\alpha \beta} - \bar \gamma^{\alpha \beta} ) \bar A_{\alpha  \beta}
+  (  \gamma^{\alpha \beta} - \bar \gamma^{\alpha \beta} ) ( A_{\alpha  \beta} - \bar A_{\alpha \beta} ) ,
\end{split}
\ee
where
$ A_{\alpha \beta} = - \la D_{\p_\alpha} \p_\beta ,  \nu \ra_{g} $ and 
$  \bar A_{\alpha \beta}  =  - \la \bar D_{\p_\alpha} \p_\beta , \bar \nu \ra_{\bar g} $.

We estimate $ \gamma^{\alpha \beta} - \bar \gamma^{\alpha \beta}$, $ g^{ij} - \bar g^{ij}$, 
and $ \nu - \bar \nu$ first.
For convenience, suppose $\{ \p_i \}$ are orthonormal with respect to $\bar g$ at $ p$.
In particular, $ \p_n = \bar \nu$ at $p$.

\begin{lma} \label{lem-0-order-geometry}
In the above coordinates, 
\be
\gamma^{\alpha \delta} = \bar \gamma^{\alpha \delta} - h_{\alpha \delta}  + O ( | h |_{\bar \gamma}^2 ) ,
\ \ 
g^{ij} = \bar g^{ij} - h_{ij} + O ( | h |_{\bar g}^2 ) ,
\ee
\be
\nu - \bar \nu = \left( - h_{n \beta} + O (|h|_{\bar g}^2  ) \right) \p_\beta +   (  - \frac12 h_{nn} + O ( |h |^2_{\bar g} ) ) \p_n .
\ee
Here repeated indices denote summation over those indices. 
In terms of $X$, this gives
\be
\left| \nu - \bar \nu  + X +  \frac12 h (\bar \nu, \bar \nu) \bar \nu \right|_{\bar g} = O (|h|_{\bar g}^2  ) .
\ee 
\end{lma}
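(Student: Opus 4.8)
The plan is to compute each of the three expansions by linearizing the standard formulas for inverse metrics and for the unit normal, treating $h$ as small via the assumption $|h|_{\bar g} < \epsilon(n)$. First I would handle the inverse metric identities: writing $g_{ij} = \bar g_{ij} + h_{ij}$ and using the Neumann series $(\mathrm{Id} + \bar g^{-1} h)^{-1} = \mathrm{Id} - \bar g^{-1}h + O(|h|_{\bar g}^2)$, one gets $g^{ij} = \bar g^{ij} - \bar g^{ik}\bar g^{jl} h_{kl} + O(|h|_{\bar g}^2)$; at the point $p$ where $\{\partial_i\}$ is $\bar g$-orthonormal this is exactly $g^{ij} = \bar g^{ij} - h_{ij} + O(|h|_{\bar g}^2)$. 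The same argument applied on $\Sigma$ with the induced metrics $\gamma_{\alpha\beta} = \bar\gamma_{\alpha\beta} + h_{\alpha\beta}$ (the restriction of $h$ to tangent directions) gives $\gamma^{\alpha\delta} = \bar\gamma^{\alpha\delta} - h_{\alpha\delta} + O(|h|_{\bar\gamma}^2)$, using that $|h|_{\bar\gamma} \le C|h|_{\bar g}$ so the smallness hypothesis still applies.

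Next I would compute $\nu - \bar\nu$. The key point is that $\nu$ is characterized by two conditions: $g(\nu, \partial_\alpha) = 0$ for $\alpha = 1,\dots,n-1$, and $g(\nu,\nu) = 1$, with $\nu$ on the same side as $\bar\nu = \partial_n$ at $p$. Write $\nu = \bar\nu + a^\beta \partial_\beta + b\,\partial_n$ and solve for $a^\beta, b$ to first order. The tangency condition $g(\nu,\partial_\alpha)=0$ reads $h_{n\alpha} + a^\beta(\bar g_{\alpha\beta} + h_{\alpha\beta}) + b\, h_{n\alpha} = 0$, so at $p$ one finds $a^\alpha = -h_{n\alpha} + O(|h|_{\bar g}^2)$. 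The normalization $g(\nu,\nu)=1$ then forces $1 = (1 + h_{nn}) + 2b + O(|h|_{\bar g}^2)$, giving $b = -\tfrac12 h_{nn} + O(|h|_{\bar g}^2)$. This yields the stated formula for $\nu - \bar\nu$. For the final reformulation, recall $X$ is dual to $h(\bar\nu,\cdot)$ on $\Sigma$, so at $p$ its components are $X^\beta = \bar\gamma^{\beta\delta} h_{n\delta} = h_{n\beta}$; hence $a^\beta\partial_\beta = -X + O(|h|_{\bar g}^2)$, and since $h(\bar\nu,\bar\nu) = h_{nn}$ and $\bar\nu = \partial_n$ at $p$, the combination $\nu - \bar\nu + X + \tfrac12 h(\bar\nu,\bar\nu)\bar\nu$ is $O(|h|_{\bar g}^2)$ at $p$; as both sides are tensorial this holds independent of the coordinate choice.

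The only mild subtlety — not really an obstacle — is bookkeeping the error terms: one must check that the $O(|h|_{\bar g}^2)$ remainders in the Neumann series and in solving the $2\times 2$ (effectively) linear system for $a^\beta, b$ are genuinely quadratic with constants depending only on $n$, which is where the uniform bound $|h|_{\bar g} < \epsilon(n) < \tfrac{1}{n-1}$ is used to keep $\mathrm{Id} + \bar g^{-1} h$ invertible with controlled inverse. Everything else is a direct substitution. I would organize the writeup as: (i) inverse metric expansions on $U$ and on $\Sigma$; (ii) the linear algebra for $\nu$; (iii) translation into the $X$-formulation.
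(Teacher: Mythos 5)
Your plan is correct and follows essentially the same route as the paper: the paper also inverts $\gamma_{\alpha\beta}=\bar\gamma_{\alpha\beta}+\tau_{\alpha\beta}$ by writing $\gamma^{\alpha\beta}=\delta^{\alpha\beta}-\eta^{\alpha\beta}$ and bounding $\eta$ via the implicit relation (your Neumann-series phrasing is the same estimate), and it determines $\nu$ from the two conditions $\langle\nu,\partial_\beta\rangle_g=0$ and $\langle\nu,\nu\rangle_g=1$ by exactly the bootstrap you describe ($\nu^i=O(1)$, then $\nu^\beta=O(|h|_{\bar g})$, then the quadratic refinements). The translation into the $X$-formulation is likewise immediate in both arguments, so there is nothing further to add.
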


\begin{proof}
Let $ \tau $ denote the tangential restriction of $h$ to $\Sigma$.
At $p$,  $ \gamma_{\alpha \beta} = \delta_{\alpha \beta} + \tau_{\alpha \beta} $. 
Write 
$ \gamma^{\alpha \beta} = \delta^{\alpha \beta} - \eta^{\alpha \beta} $, 
then
\be \label{eq-tau-eta-1}
\tau_{\alpha \delta} - \tau_{\alpha \beta} \eta^{\beta \delta} = \eta^{\alpha \delta}.
\ee
This and \eqref{eq-condition-small} imply 
$ \epsilon +  (n-1)  M \epsilon \ge | \eta^{\alpha \delta} | $,
where $ M = \max_{ \alpha , \beta } \,  | \eta^{\alpha \beta} | $.
As $\alpha$, $\delta$ are arbitrary,  $ \epsilon +  (n-1)  M \epsilon \ge  M  $. 
As a result, 
$  | \eta^{\alpha \delta} | \le C_1 $. 
Here and below, $\{ C_i \} $ denote positive constants that only depend on $n$.
By \eqref{eq-tau-eta-1}, 
\be
 | \eta^{\alpha \delta} | \le C_2  \, \max_{\alpha , \delta} | \tau_{\alpha \delta} | .
\ee
This in turn implies
\be
\gamma^{\alpha \delta} = \bar \gamma^{\alpha \delta} - \tau_{\alpha \delta}  + O ( | \tau |_{\bar \gamma}^2 ) . 
\ee
For the same reason,
\be
g^{ij} = \bar g^{ij} - h_{ij} + O ( | h |_{\bar g}^2 ) .
\ee

Next, we consider $ \nu - \bar \nu$, where $\bar \nu = \p_n$. 
The condition $\la \nu, \nu \ra_g = 1 $ gives
\be \label{eq-nu-0}
 \nu^i \nu^j g_{ij}  = \nu^i \nu^j ( \delta_{ij} + h_{ij} ) = 1, 
\ee
which implies $ \nu^i = O (1) $. 
The condition $ \la \nu, \p_\beta \ra_g = 0$ gives
\be \label{eq-nu-2}
0 = \nu^\alpha g_{\alpha \beta} + \nu^n g_{n \beta} 
= \nu^\beta + \nu^\alpha  h_{\alpha \beta}   + \nu^n h_{n \beta} .
\ee
Combined with $ \nu^i = O (1) $, this gives
$  \nu^\beta = O ( |h|_{\bar g} ) $.
Coming back to \eqref{eq-nu-0}, we have 
\be \label{eq-nu-n-2}
1 = \nu^\alpha \nu^\beta g_{\alpha \beta} + 2 \nu^\alpha \nu^n g_{\alpha n} + ( \nu^n )^2 g_{nn}  .
\ee
Thus,  $ 1 = O ( |h |^2_{\bar g} ) +  (\nu^n)^2  ( 1 + h_{nn} ) $. 
This shows
\be \label{eq-nu-n-1}
\nu^n = 1 - \frac12 h_{nn} + O ( |h |^2_{\bar g} ) .
\ee
Plugging \eqref{eq-nu-n-1} into \eqref{eq-nu-2}, we have
\be \label{eq-nu-3}
\nu^\beta + \nu^\alpha  h_{\alpha \beta}   + ( 1 + O (|h|_{\bar g} )  h_{n \beta} = 0 ,
\ee
which gives  
$ \nu^\beta = - h_{n \beta} + O (|h|_{\bar g}^2  ) $. 
This proves the lemma.
\end{proof}

To estimate the difference of $ A $ and $ \bar A$, 
 let $ D$ and $ \bar D$ denote the connection of  $g$ and $ \bar g$, respectively. 
Let  $ T = D - \bar D$, 
then $ T$ is a $(1,2)$ tensor. 
With respect to  $\{ x_i \}$, if we write 
$ D_{\p_i} \p_j - \bar D_{\p_i} \p_j = T_{ij}^k \, \p_k $, 
then 
\be
T_{ij}^k = \frac12 g^{kl} ( g_{lj; i} + g_{il;j} - g_{ij;l} ).
\ee
Here ``$ ; $" denotes covariant differentiation with respect to $\bar g$. 
By Lemma \ref{lem-0-order-geometry}, 
\be \label{eq-T-ijk}
\begin{split}
T_{ij}^k = & \ \frac12 g^{kl} ( h_{lj; i} + h_{il;j} - h_{ij;l} ) \\
 = & \  \frac12 ( \bar g^{kl} - h_{kl} + O ( | h |_{\bar g}^2) )  ( h_{lj; i} + h_{il;j} - h_{ij;l} ) \\
 = & \  \frac12 ( h_{kj; i} + h_{ik;j} -   h_{ij;k}  )  - \frac12  h_{kl} ( h_{lj; i} + h_{il;j} - h_{ij;l} ) + O ( |h|_{\bar g}^2 \, | \bar D h |_{\bar g} ) .
\end{split}
\ee

\begin{lma} \label{lem-II-forms}
The second fundamental forms $A$ and $ \bar A$ satisfy 
\be \label{eq-A-barA-d}
A_{\alpha \beta} - \bar A_{\alpha \beta} 
= \frac12 \bar A_{\alpha \beta} h_{nn} -  T_{\alpha \beta}^n  +  \bar A_{\alpha \beta} \, O ( | h |^2_{\bar g} )   
+ O ( | \bar D h |_{\bar g} \, | h |_{\bar g} ) .
\ee
\end{lma}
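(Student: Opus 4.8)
The plan is to expand $A_{\alpha\beta}-\bar A_{\alpha\beta}$ directly from the defining formulas $A_{\alpha\beta}=-\langle D_{\partial_\alpha}\partial_\beta,\nu\rangle_g$ and $\bar A_{\alpha\beta}=-\langle \bar D_{\partial_\alpha}\partial_\beta,\bar\nu\rangle_{\bar g}$, splitting the difference into three pieces: the change of connection $D-\bar D=T$, the change of normal vector $\nu-\bar\nu$, and the change of metric $g-\bar g=h$ used in the inner product. Concretely I would write
\be
A_{\alpha\beta} = -\langle D_{\partial_\alpha}\partial_\beta, \nu\rangle_g
= -\langle \bar D_{\partial_\alpha}\partial_\beta, \nu\rangle_g - \langle T_{\alpha\beta}{}^k \partial_k, \nu\rangle_g ,
\ee
then replace $\langle\,\cdot\,,\,\cdot\,\rangle_g$ by $\langle\,\cdot\,,\,\cdot\,\rangle_{\bar g}+h(\,\cdot\,,\,\cdot\,)$ and replace $\nu$ by $\bar\nu+(\nu-\bar\nu)$, using the expansions from Lemma \ref{lem-0-order-geometry} for $\nu-\bar\nu$ and from \eqref{eq-T-ijk} for $T_{\alpha\beta}{}^k$.

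The bookkeeping goes as follows. In the term $-\langle \bar D_{\partial_\alpha}\partial_\beta,\nu\rangle_g$, the background covariant derivative $\bar D_{\partial_\alpha}\partial_\beta$ has a normal component $-\bar A_{\alpha\beta}\bar\nu$ and a tangential component; pairing the normal component with $\bar\nu$ via $\bar g$ reproduces $\bar A_{\alpha\beta}$, pairing it with the correction $\nu-\bar\nu$ produces $\frac12\bar A_{\alpha\beta}h_{nn}$ (using that the $\partial_n$-component of $\nu-\bar\nu$ is $-\tfrac12 h_{nn}+O(|h|^2_{\bar g})$), and the tangential component of $\bar D_{\partial_\alpha}\partial_\beta$ pairs to zero with $\bar\nu$ in $\bar g$ but contributes an $O(|\bar D h|_{\bar g}|h|_{\bar g})$ error when paired with $\nu-\bar\nu$ or through the $h$-correction of the inner product; likewise the $h$-correction $h(\bar D_{\partial_\alpha}\partial_\beta,\bar\nu)$ is $O(|\bar D h|_{\bar g}\,|h|_{\bar g})$ since $|\bar D_{\partial_\alpha}\partial_\beta|$ is controlled by $|\bar A|$ plus bounded Christoffel data, and the $\bar A$-proportional part is absorbed into $\bar A_{\alpha\beta}O(|h|^2_{\bar g})$. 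In the term $-\langle T_{\alpha\beta}{}^k\partial_k,\nu\rangle_g$, the leading contribution is $-\langle T_{\alpha\beta}{}^k\partial_k,\bar\nu\rangle_{\bar g}=-T_{\alpha\beta}{}^n$, while replacing $\bar\nu$ by $\nu$ and $\bar g$ by $g$ multiplies the already-first-order quantity $T$ by first-order quantities ($\nu-\bar\nu$, or $h$), giving terms of size $O(|\bar D h|_{\bar g}\,|h|_{\bar g})$ after using \eqref{eq-T-ijk}. Collecting, one obtains exactly \eqref{eq-A-barA-d}.

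The main obstacle is the careful error accounting rather than any conceptual difficulty: one must consistently track that every term is either (i) proportional to $\bar A$ times $O(|h|^2_{\bar g})$, or (ii) $O(|\bar D h|_{\bar g}\,|h|_{\bar g})$, and in particular verify that the tangential part of $\bar D_{\partial_\alpha}\partial_\beta$—whose size involves the background Christoffel symbols in the chosen chart—does not leak in at first order. This is handled by the normalization that $\{\partial_i\}$ is $\bar g$-orthonormal at $p$ (so the background Christoffel symbols vanish at $p$, and their first derivatives are controlled by $|\bar A|$ and curvature, which are fixed geometric data), reducing the pointwise identity at $p$ to the clean expansion above; since $p\in\Sigma$ is arbitrary the formula holds on all of $\Sigma$. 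A secondary point to check is that the quadratic-in-$T$ term $-\frac12 h_{kl}(h_{lj;i}+h_{il;j}-h_{ij;l})$ appearing in \eqref{eq-T-ijk}, once contracted against $\partial_k$ and $\nu$, is $O(|\bar D h|_{\bar g}\,|h|^2_{\bar g})$ and hence safely inside $O(|\bar D h|_{\bar g}\,|h|_{\bar g})$ under the smallness assumption \eqref{eq-condition-small}.
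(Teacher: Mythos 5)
Your proposal follows essentially the same route as the paper's proof: the identical three-way decomposition into the connection change $T$, the normal change $\nu-\bar\nu$, and the metric change $h$, with the same use of Lemma \ref{lem-0-order-geometry} and \eqref{eq-T-ijk} and the same error bookkeeping. The one imprecision is that $\bar g$-orthonormality of $\{\partial_i\}$ at $p$ alone does not make Christoffel symbols vanish there; the paper additionally chooses the restricted coordinates $\{x_\alpha\}$ to be normal at $p$ on $(\Sigma,\bar\gamma)$, which is precisely what kills the tangential component $(\bar D_{\partial_\alpha}\partial_\beta)^\delta$ --- the very term you correctly flag as the one needing care.
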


\begin{proof} 
By definition, 
\be
\begin{split}
A_{\alpha \beta} - \bar A_{\alpha \beta} = & \ \la \bar D_{\p_\alpha} \p_\beta, \bar \nu \ra_{\bar g} - \la D_{\p_\alpha} \p_\beta, \nu \ra_g  \\ 
= & \ ( \bar D_{\p_\alpha} \p_\beta )^i \bar \nu^j \bar g_{ij}  
- [ ( \bar D_{\p_\alpha} \p_\beta )^i  + T_{\alpha \beta}^i ]( \bar \nu^j +  \nu^j  - \bar \nu^j ) ( \bar g_{ij} +  h_{ij} ) .
\end{split}
\ee
For convenience, we can make the following simplification. 
In addition to assuming $ \{ \p_i \}$ are orthonormal at $p $ with respect to $\bar g$, 
we also assume that  $\{ x_i \}$ are chosen so that their restrictions to $\Sigma$, 
$\{ x_\alpha \}$, are normal at $ p $ on $(\Sigma, \bar \gamma)$. As a result, at $ p $,  
\be \label{eq-simplification}
( \bar D_{\p_\alpha } \p_\beta )^\delta = 0 , \ \text{and} \  ( \bar D_{\p_\alpha } \p_\beta)^n = - \bar A_{\alpha \beta} . 
\ee

By Lemma \ref{lem-0-order-geometry} and \eqref{eq-simplification}, we  have
\be \label{eq-start-1}
\begin{split}
- ( \bar D_{\p_\alpha} \p_\beta )^i  (  \nu^j  - \bar \nu^j ) \bar g_{ij} 
= & \ - ( \bar D_{\p_\alpha} \p_\beta )^n  (  \nu^n  - \bar \nu^n )  \\
= & \ \bar A_{\alpha \beta}  ( - \frac12 h_{nn} ) +  \bar A_{\alpha \beta} \, O ( | h |^2_{\bar g} )   ,
\end{split}
\ee
\be
\begin{split}
 -  T_{\alpha \beta}^i ( \bar \nu^j +  \nu^j  - \bar \nu^j ) \bar g_{ij} 
= & \ -  T_{\alpha \beta}^n  -  T_{\alpha \beta}^i (  \nu^j  - \bar \nu^j ) \bar g_{ij} ,
\end{split} 
\ee
\be
- ( \bar D_{\p_\alpha} \p_\beta )^i  ( \bar \nu^j  )  h_{ij} = \bar A_{\alpha \beta} \, h_{nn} ,
\ee
\be
\begin{split}
 - ( \bar D_{\p_\alpha} \p_\beta )^i  ( \nu^j  - \bar \nu^j  )  h_{ij} 
= & \   \bar A_{\alpha \beta} ( \nu^\delta  - \bar \nu^\delta  )  h_{n \delta }  + \bar A_{\alpha \beta} ( \nu^n  - \bar \nu^n  )  h_{n n } \\
= & \ \bar A_{\alpha \beta} ( - h_{n \delta}  )  h_{n \delta }  + \bar A_{\alpha \beta} ( - \frac12 h_{nn} )  h_{n n } 
+  \bar A_{\alpha \beta} \, O ( | h |_{\bar g}^3 ) ,
\end{split}
\ee
\be \label{eq-end-1}
 T_{\alpha \beta}^i ( \bar \nu^j +  \nu^j  - \bar \nu^j )  h_{ij} 
 = T_{\alpha \beta}^i \bar \nu^j h_{ij}  + O ( | \bar D h |_{\bar g} \, | h |_{\bar g}^2 ) . 
\ee
Equation \eqref{eq-A-barA-d}  follows from \eqref{eq-start-1} -- \eqref{eq-end-1}.
\end{proof}

\vspace{.1cm}

\begin{proof}[Proof of Proposition \ref{prop-mean-curvature}]
We recall from \eqref{eq-H-bar-H-0} that
\be \label{eq-H-bar-H-1}
\begin{split}
H - \bar H 
= & \ \bar \gamma^{\alpha \beta} ( A_{\alpha \beta} - \bar A_{\alpha \beta} ) 
+  (  \gamma^{\alpha \beta} - \bar \gamma^{\alpha \beta} ) \bar A_{\alpha  \beta}
+  (  \gamma^{\alpha \beta} - \bar \gamma^{\alpha \beta} ) ( A_{\alpha  \beta} - \bar A_{\alpha \beta} ) .
\end{split}
\ee
By Lemma \ref{lem-II-forms}, 
\be \label{eq-H-barH-0}
\bar \gamma^{\alpha \beta} ( A_{\alpha \beta} - \bar A_{\alpha \beta} ) 
=   \frac12 \bar H  h_{nn} - \bar \gamma^{\alpha \beta}  T_{\alpha \beta}^n  + \bar H \, O ( | h |^2_{\bar g} )   .
+ O ( | \bar D h |_{\bar g} \, | h |_{\bar g} ) ,
\ee
where,  by \eqref{eq-T-ijk}, 
\be
\begin{split}
 - \bar \gamma^{\alpha \beta}  T_{\alpha \beta}^n 
 =  & \ - \bar \gamma^{\alpha \beta}  \frac12 ( h_{n  \beta; \alpha } + h_{\alpha n; \beta} -   h_{\alpha \beta; n }  )  + O ( |h|_{\bar g} \, | \bar D h |_{\bar g} ) \\
 =  & \ - (\div_{\bar g} h )_{n}  +  \frac12 (d \, \tr_{\bar g} h )_{n} + \frac12 h_{nn;n} + O ( |h|_{\bar g} \, | \bar D h |_{\bar g} ) .
\end{split}
\ee
By Lemma \ref{lem-0-order-geometry}, 
\be
(  \gamma^{\alpha \beta} - \bar \gamma^{\alpha \beta} ) \bar A_{\alpha  \beta}
=  - \la  h, \bar A \ra_{\bar \gamma} + O ( | h |_{\bar \gamma}^2 ) \, | \bar A|_{ \bar \gamma} .
\ee
Moreover, by Lemma \ref{lem-0-order-geometry} and Lemma \ref{lem-II-forms}, 
\be \label{eq-3rd-term}
 (  \gamma^{\alpha \beta} - \bar \gamma^{\alpha \beta} ) ( A_{\alpha  \beta} - \bar A_{\alpha \beta} ) 
 =  O ( | h |^2_{\bar g} ) \, | \bar A |_{\bar g}  + O ( | \bar D h |_{\bar g} \, | h |_{\bar g} ).
\ee
Therefore, it follows from \eqref{eq-H-bar-H-1} -- \eqref{eq-3rd-term} that
\be \label{eq-H-M03}
\begin{split}
 H - \bar H 
= & \ \frac12 \bar H  h_{nn} +  \frac12 h_{nn;n} - \left( \div_{\bar g} h  - \frac12 d \, \tr_{\bar g} h \right)_{n}   - \la h , \bar A \ra_{\bar \gamma}  \\    
& \  + | \bar A |_{\bar g} \, O ( | h |^2_{\bar g} ) + O ( | \bar D h |_{\bar g} \, | h |_{\bar g} ).
\end{split}
\ee
(A linearized version of \eqref{eq-H-M03} can be found in \cite[Equation 42]{Miao02}.)

On the other hand, we have an identity 
\be \label{eq-identity-MT}
(\div_{\bar g} h)_n = h_{nn;n} + \div_{\bar \gamma } X + \bar H h_{nn} - \la h, \bar A \ra_{\bar \gamma} .
\ee
(See  \cite[Equation (32)]{MT09} for instance.)
\eqref{eq-H-bar-H-prop} now follows from \eqref{eq-H-M03} and \eqref{eq-identity-MT}.
\end{proof}

\section{Mass flux across the boundary of a polyhedron}

Let $(M^n, g)$ be an asymptotically flat manifold. Let $ K \subset M $ and $ B_r (0) \subset \R^n $ 
be given in Definition \ref{def-AF}.
In what follows, we identify $ M \setminus K $ with $ \R^n \setminus B_r (0)$, and let 
$ \bar g = \delta_{ij} d x_i \, d x_j $ denote the background Euclidean metric.

We consider a polyhedron $P \subset  (\R^n, \bar g) $ such that  the interior of $P$ contains $ B_r (0)$.
Let $ \p P$ be the boundary of $ P$, which is a union of finitely many faces 
$$ F_1, \cdots, F_{ f (P)} , $$
where $ f(P)$ denotes the number of faces in $ \p P$. 
Each face $ F_{_A}$, $ 1 \le A \le f(P)$, is an $(n-1)$-dimensional polyhedron lying in a hyperplane  
in $ (\R^n, \bar g)$.
We let
$$ r_{_P} = \min_{ x \in \p P } | x | .$$
As $g$ is asymptotically flat, we assume $ r_{_P}$ is sufficiently large so that
\be
| h |_{\bar g} (x) < \epsilon (n) ,   \ \ \text{if} \ |x| > \frac12 r_{_P}. 
\ee
Here $ h = g - \bar g $ and $ \epsilon (n) $ is the constant in \eqref{eq-condition-small}.

Let $ F$ be a face of $ \p P$.  
Let $ d \sigma$, $ d \bar \sigma$ denote the volume measure on $F$ with respect to the induced metrics
$\gamma$, $ \bar \gamma$, respectively. 
Let $ \nu$, $ \bar \nu$ be the outward unit normal to $ F$ in 
$(M, g)$, $(\R^n, \bar g)$, respectively. Let $H$ be the mean curvature of $F$ in $(M, g)$
with respect to $\nu$.
By Proposition \ref{prop-mean-curvature}, 
\be \label{eq-int-H}
\begin{split}
2 \int_{F} H \, d \bar \sigma = 
\int_F ( h_{ii,j} - h_{ij,i} ) \bar \nu^j \, d \bar \sigma - \int_{F} \div_{\bar \gamma} X \, d \bar \sigma
+ \int_{F} O ( | \bar D h |_{\bar g} \, |h |_{\bar g} ) \, d \bar \sigma . 
\end{split}
\ee
Here we made use of the fact that $ F$ is totally geodesic in $(\R^n, \bar g)$.

By \eqref{eq-H-M03}, $ H = O ( r_{_P}^{-p - 1} )$. By \eqref{eq-g-decay}, $ d \sigma = ( 1 + O ( r_{_P}^{-p} ) ) \, d \bar \sigma $.
Thus,
\be \label{eq-int-H-1}
\int_F H \, d \bar \sigma =  \int_F H \, d \sigma + |F |_{\bar \gamma }  \, O ( r_{_P}^{- 2 p - 1} ) . 
\ee
Here $ | F |_{\bar \gamma} $ denotes the $(n-1)$-dimensional volume of $F$ in $(\R^n, \bar g)$.

Similarly, by \eqref{eq-g-decay}, 
\be \label{eq-dh-h}
\int_{F} O ( | \bar D h |_{\bar g} \, |h |_{\bar g} ) \, d \bar \sigma = | F |_{\bar \gamma} \, O ( r_{_P}^{-2 p - 1} ) .
\ee

Integrating by parts, we have
\be \label{eq-F-2-E}
\int_F \div_{\bar \gamma} X \, d \bar \sigma = \int_{\p F} \la X, \bar n \ra_{\bar \gamma} \, d \bar \mu .
\ee
Here $\bar n $ denotes the outward unit normal to $\p F$ in $(F, \bar \gamma)$, and $d \bar \mu$ is the 
volume element on $ \p F$ induced from $\bar \gamma$. By the definition of $X$ and $h$, 
\be
\la X, \bar n \ra_{\bar \gamma} = h (\bar \nu, \bar n ) = g (\bar \nu, \bar n ). 
\ee

Next we consider two adjacent faces $F_{_A}$ and $F_{_B}$. Let $ \theta $ be the angle between $\nu_{_A} $ and $\nu_{_B}$
in $(M, g)$, and let $\bar \theta $ be the angle between $\bar \nu_{_A}$ and $\bar \nu_{_B}$ in $ (\R^n, \bar g)$. 

The contribution from 
$$ \int_{F_{_A} } \div_{\bar \gamma} X \, d \bar \sigma \ \ \text{and} \ \  
 \int_{F_{_B} } \div_{\bar \gamma} X \, d \bar \sigma $$
on the edge $F_{_A} \cap F_{_B}$ is 
\be \label{eq-edge-contribution}
\int_{ F_{_A} \cap F_{_B} } g (\bar \nu_{_A}, \bar n_{_A} ) + g (\bar \nu_{_B}, \bar n_{_B}  ) \, d \bar \mu . 
\ee

In what follows, we write
$$ \bar \nu_{_A} = (a_1, \cdots, a_n ) \  \ \text{and} \ \  \bar \nu_{_B} = (b_1, \cdots, b_n ) , $$
where
$ \sum_{i=1}^n a_i^2 = 1 $ and $ \sum_{i=1}^n b_i^2 = 1 $.
Using the fact $ F_{_A}$ and $ F_{_B}$ are, respectively, part of a level set of the functions
$$ f_{_A} (x) =  a_i x_i  \ \ \text{and} \ \ f_{_B} (x) = b_i x_i , $$
we know that the outward unit normal $\nu_{_A}$, $ \nu_{_B}$ to $F_{_A}$, $ F_{_B}$, respectively
with respect to $g$, is given by 
\be
\nu_{_A} = \frac{ a^i e_i}{ ( a_i a_j g^{ij}  )^\frac12 } \ \
\text{and} \ \
\nu_{_B} = \frac{ b^i e_i}{ ( b_i b_j g^{ij}  )^\frac12 }.
\ee
Here $ a^i = g^{ij} a_j $,  $ b^i = g^{ik} b_k $, 
and $e_i  = \p_{x_i} $.

By definition, 
\be
\begin{split}
\cos \theta =   g( \nu_{_A}, \nu_{_B} )
= \frac{ a_i b_j g^{ij} }{ ( a_i a_j g^{ij} )^\frac12 ( b_i b_j g^{ij}  )^\frac12  }.
\end{split}
\ee
By Lemma \ref{lem-0-order-geometry},
\be
a_i b_j g^{ij} = \bar g ( \bar \nu_{_A}, \bar \nu_{_B} )  - a_i b_j h_{ij} + O ( r_{_P}^{-2p} ),
\ee
\be
( a_i a_j g^{ij}  )^\frac12 =  1 - \frac12  a_i a_j h_{ij} + O ( r_{_P}^{-2p} ) ,
\ee
\be
\begin{split}
( b_i b_j g^{ij}  )^\frac12 =  1 - \frac12  b_i b_j h_{ij} + O ( r_{_P}^{-2p} ) .
\end{split}
\ee
Therefore,
\be
\begin{split}
\cos \theta = & \  \left[ \bar g (  \bar \nu_{_A}, \bar \nu_{_B} )  - a_i b_j h_{ij}  + O ( r_{_P}^{-2p} ) \right]
\left[ 1 + \frac12  a_i a_j h_{ij} + \frac12  b_i b_j h_{ij}  + O ( r_{_P}^{-2p} ) \right] \\
= & \   \bar g ( \bar \nu_{_A}, \bar \nu_{_B} )  
\left[ 1 + \frac12  a_i a_j h_{ij} + \frac12  b_i b_j h_{ij} \right]  - a_i b_j h_{ij}  + O ( r_{_P}^{-2p} ) .
\end{split}
\ee
Since $ \cos \bar \theta =  \bar g ( \bar \nu_{_A}, \bar \nu_{_B} )$, this gives 
\be \label{eq-E}
\begin{split}
\cos \theta - \cos \bar \theta 
= & \ \frac12   \cos \bar \theta \left(  a_i a_j h_{ij} +   b_i b_j h_{ij} \right)  - a_i b_j h_{ij}  + O ( r_{_P}^{-2p} ) .
\end{split}
\ee

Next we consider two cases depending on whether $ P$ is convex at the edge $F_{_A} \cap F_{_B} $.

\vh

\noindent {\bf Case 1}. $ P$ is convex at $F_{_A} \cap F_{_B} $. 
This means, if $ \bar \alpha$ is the Euclidean dihedral angle of $P$ at $ F_{_A} \cap F_{_B}$, 
then $ 0 < \bar \alpha < \pi $.

\begin{figure}[h]
\centering
\includegraphics[scale=1]{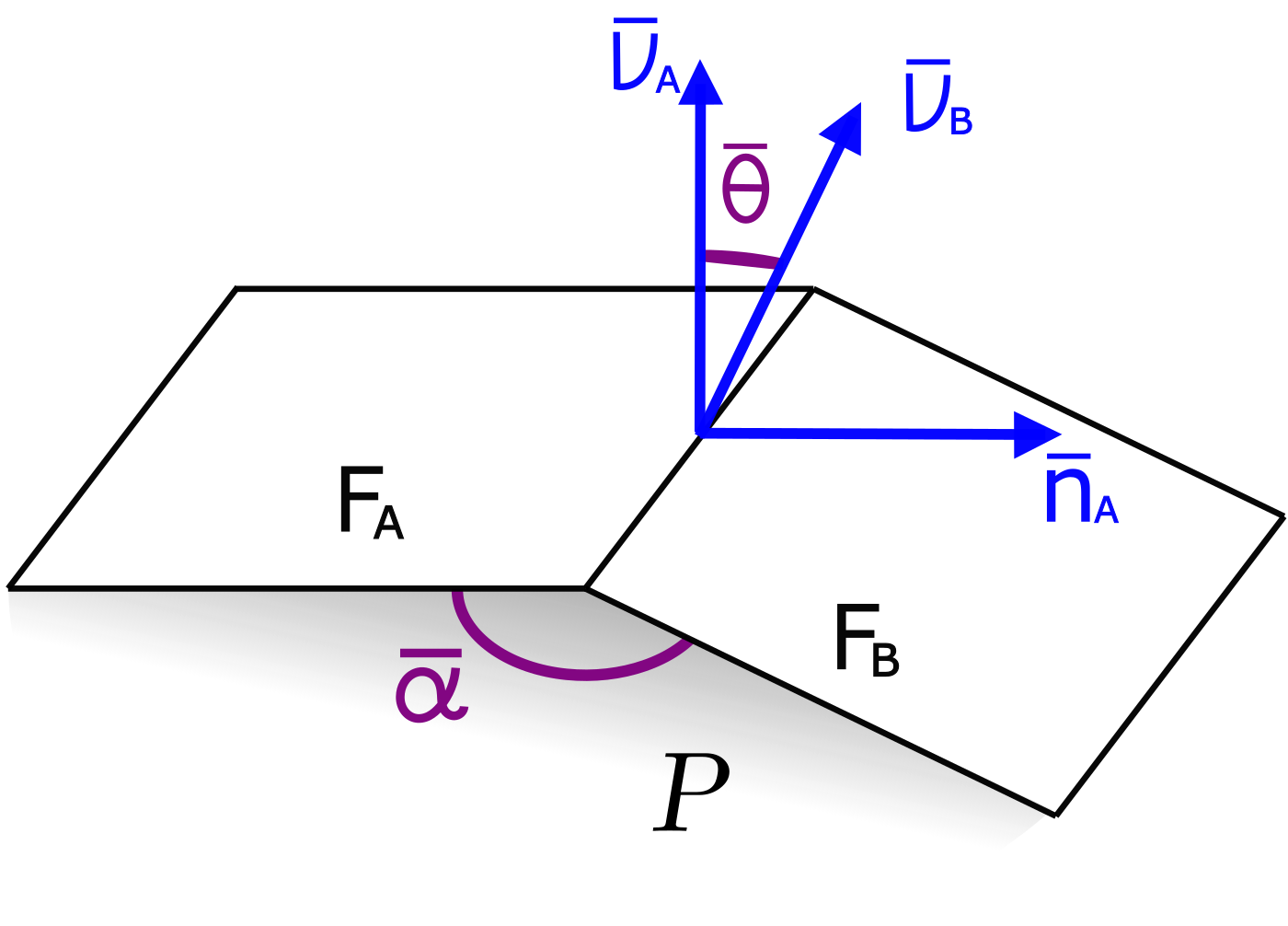}
\caption{$P$ is convex at $ F_{_A} \cap F_{_B}$.}
\label{fig-convex}
\end{figure}

In this case, we have 
\be \label{eq-alpha-theta-convex}
\bar \alpha + \bar \theta = \pi  \ \ \text{and} \ \
(\sin \bar \theta) \, \bar n_{_A} = - (\cos \bar \theta) \,  \bar \nu_{_A} + \bar \nu_{_B} .
\ee
Hence, 
\be \label{eq-f-a}
\begin{split}
\sin \bar \theta \,  g (\bar \nu_{_A} , \bar n_{_A} ) 
= & \ g ( \bar \nu_{_A} , - (\cos \bar \theta) \,  \bar \nu_{_A} + \bar \nu_{_B} ) \\
= & \ - \cos \bar \theta \, ( 1 + a_i a_j h_{ij}  ) + \cos \bar \theta  + a_i b_j h_{ij}  \\
= & \  \frac12   \cos \bar \theta \left( -  a_i a_j h_{ij} +   b_i b_j h_{ij} \right)  + \cos \bar \theta -  \cos \theta 
+ O ( r_{_P}^{-2p} ) .
\end{split}
\ee
Here we used \eqref{eq-E} in the last step. 

Similarly, we have
\be \label{eq-f-b}
\begin{split}
\sin \bar \theta \,  g (\bar \nu_{_B} , \bar n_{_B} ) 
= & \ \frac12  \cos \bar \theta \,   \left( - b_i b_j  h_{i j} + a_i a_j  h_{ i j} \right)
+  \cos \bar \theta  - \cos \theta +  O ( r_{_P}^{-2 p} ) .
\end{split}
\ee
Therefore, by \eqref{eq-f-a} and \eqref{eq-f-b}, we conclude 
\be \label{eq-basic-angle-convex}
\begin{split}
 g (\bar \nu_{_A} , \bar n_{_A} )  +   g (\bar \nu_{_B} , \bar n_{_B} ) 
=  2  \left[  \frac{1}{ \sin \bar \theta}   ( \cos \bar \theta  - \cos \theta )  +  \frac{1}{ \sin \bar \theta}  O ( r_{_P}^{-2 p} ) \right] .
\end{split}
\ee

For later use, we note the relation between $\theta - \bar \theta$ and $ \alpha - \bar \alpha$ in this case.
Here $ \alpha $ is the dihedral angle of $P$ at $ F_{_A} \cap F_{_B}$ with respect to $g$.
Since $g$ is a metric continuously defined in a neighborhood of $\p P$, $ \alpha $ also satisfies 
\be \label{eq-alpha-theta}
 0 < \alpha < \pi  \ \  \text{and} \ \ \alpha + \theta = \pi . 
\ee
As a result,
\be \label{eq-alpha-theta-minus}
\theta - \bar \theta = \bar \alpha - \alpha . 
\ee

\vh

\noindent {\bf Case 2}. $ P$ is non-convex at $F_{_A} \cap F_{_B}$. This means $ \pi < \bar \alpha < 2 \pi $.

\begin{figure}[h]
\centering
\includegraphics[scale=1]{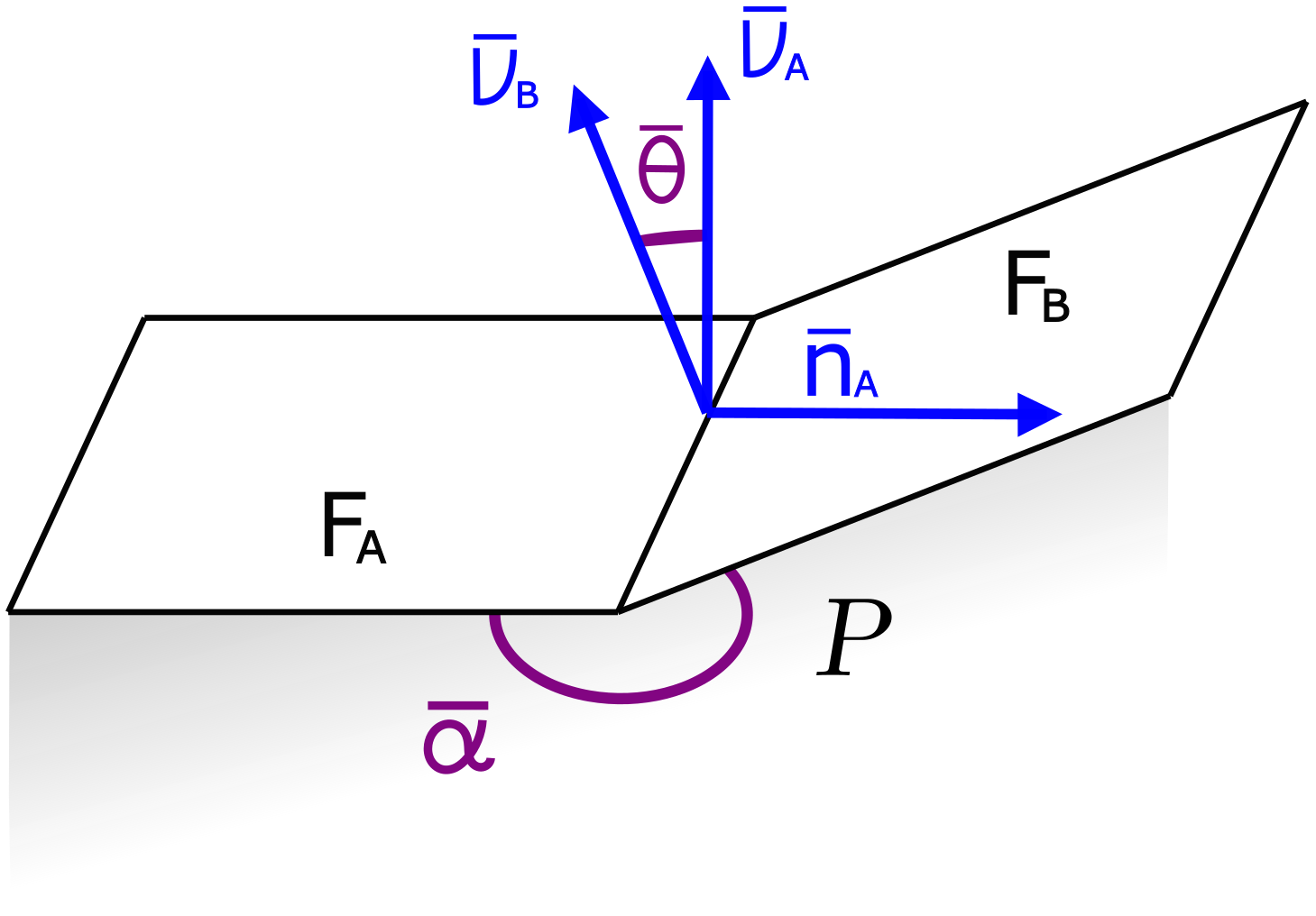}
\caption{$P$ is concave at $ F_{_A} \cap F_{_B}$.}
\label{fig-concave}
\end{figure}

In this case,
\be
\bar \alpha = \pi + \bar \theta \ \ \text{and} \ 
- (\sin \bar \theta) \, \bar n_{_A} = - (\cos \bar \theta) \,  \bar \nu_{_A} + \bar \nu_{_B} .
\ee
By similar calculations, we have
\be \label{eq-basic-angle-non-convex}
\begin{split}
 - g (\bar \nu_{_A} , \bar n_{_A} )  -   g (\bar \nu_{_B} , \bar n_{_B} ) 
=  2  \left[  \frac{1}{ \sin \bar \theta}   ( \cos \bar  \theta  - \cos \theta )  +  \frac{1}{ \sin \bar \theta}  O ( r_{_P}^{-2 p} ) \right] .
\end{split}
\ee
Also,  $ \alpha = \pi + \theta $ in this case. 
Consequently, 
\be \label{eq-alpha-theta-minus-1}
\theta - \bar \theta = \alpha - \bar \alpha. 
\ee

\vh
 
In either case, we can replace the term 
$$  \frac{1}{ \sin \bar \theta}   ( \cos \bar  \theta  - \cos \theta )  $$
via  $ (\theta - \bar \theta)$.   
By definition, $ \theta $ and $ \bar \theta$ satisfy 
$ 0 < \theta , \, \bar \theta < \pi $. Thus, by Taylor's theorem, 
\be \label{eq-mean-value}
\begin{split}
\theta - \bar \theta = & \  - \frac{1}{ \sin \bar \theta } ( \cos \theta - \cos \bar \theta )
- \frac12  \frac{ \cos \xi}{ (\sin \xi)^3} ( \cos \theta - \cos \bar \theta )^2
\end{split}
\ee
for some $ \xi $ between $ \theta $ and $ \bar \theta $. 
Combined with \eqref{eq-E}, this shows
\be \label{eq-mean-value-1}
\begin{split}
 \frac{1}{ \sin \bar \theta } ( \cos \theta - \cos \bar \theta ) = 
\bar \theta  -  \theta +  \frac{ 1 }{ (\sin \xi)^3}  O(r_{_P}^{-2 p} ) . 
\end{split}
\ee
Therefore, 
\begin{itemize}
\item[i)] if $ P$ is convex at $F_{_A} \cap F_{_B} $, 
by \eqref{eq-basic-angle-convex},  \eqref{eq-mean-value-1} and \eqref{eq-alpha-theta-minus}, 
\bee 
\begin{split}
& \  g (\bar \nu_{_A} , \bar n_{_A} )  +   g (\bar \nu_{_B} , \bar n_{_B} ) \\
=  & \  2  ( \theta - \bar \theta )  +  \frac{ 1 }{ ( \sin \xi )^3  }  O ( r_{_P}^{-2 p} ) +  \frac{1}{ \sin \bar \theta} O ( r_{_P}^{-2 p} )   \\
=  & \  2  ( \bar \alpha  - \alpha )  +  \frac{ 1 }{ ( \sin \xi )^3  }  O ( r_{_P}^{-2 p} ) +  \frac{1}{ \sin \bar \theta} O ( r_{_P}^{-2 p} )  ;
\end{split}
\eee
\item[ii)] if $ P$ is non-convex at $F_{_A} \cap F_{_B} $, 
by \eqref{eq-basic-angle-non-convex},  \eqref{eq-mean-value-1} and \eqref{eq-alpha-theta-minus-1}, 
\bee 
\begin{split}
& \  g (\bar \nu_{_A} , \bar n_{_A} )  +   g (\bar \nu_{_B} , \bar n_{_B} ) \\
=  & \  2  ( \bar \theta - \theta )  + \frac{ 1 }{ ( \sin \xi )^3  }  O ( r_{_P}^{-2 p} ) +  \frac{1}{ \sin \bar \theta} O ( r_{_P}^{-2 p} )  \\
=  & \  2  ( \bar \alpha - \alpha )  +  \frac{ 1 }{ ( \sin \xi )^3  }  O ( r_{_P}^{-2 p} ) +  \frac{1}{ \sin \bar \theta} O ( r_{_P}^{-2 p} )  .
\end{split}
\eee
\end{itemize}
Thus, regardless of the convexity of $P$ at $ F_{_A} \cap F_{_B}$, 
we always have
\be
 g (\bar \nu_{_A} , \bar n_{_A} )  +   g (\bar \nu_{_B} , \bar n_{_B} )  = 
 2  ( \bar \alpha - \alpha )  +  \frac{ 1 }{ ( \sin \xi )^3  }  O ( r_{_P}^{-2 p} ) +  \frac{1}{ \sin \bar \theta} O ( r_{_P}^{-2 p} ) .
\ee

\vh

To proceed, we impose an angle assumption 
\be
\sin \bar \theta \ge c ,
\ee
where $ c \in (0, 1)$ is a constant independent on $P$. This together with \eqref{eq-E} implies, 
for sufficiently large $r_{_P}$, 
$$
\sin \theta \ge \frac12 c .
$$
As a result, $ \sin \xi \ge \frac12 c$, and 
\be
 g (\bar \nu_{_A} , \bar n_{_A} )  +   g (\bar \nu_{_B} , \bar n_{_B} )  = 
 2  ( \bar \alpha - \alpha )  +  c^{-3} O ( r_{_P}^{-2 p} ) .
\ee
Moreover, by \eqref{eq-alpha-theta-minus}, \eqref{eq-alpha-theta-minus-1} and \eqref{eq-mean-value}, 
\be \label{eq-alpha-d}
| \bar \alpha - \alpha | = | \theta - \bar \theta | = c^{-3} O ( r_{_P}^{-p} ) . 
\ee

Returning to \eqref{eq-edge-contribution}, we  have
\be \label{eq-edge-contribution-1}
\begin{split}
& \ \int_{F_{_A} \cap F_{_B} } g (\bar \nu_{_A} , \bar n_{_A} )  +   g (\bar \nu_{_B} , \bar n_{_B} )  \, d \bar \mu \\
=  & \ 2  \int_{ F_{_A} \cap F_{_B} } ( \bar \alpha - \alpha ) \, d \bar \mu  
 +  | F_{_A} \cap F_{_B} |_{\bar \gamma} \, c^{-3} O ( r_{_P}^{-2 p} ) \\
= & \  2  \int_{ F_{_A} \cap F_{_B} } ( \bar \alpha - \alpha ) \, d  \mu  
 +  | F_{_A} \cap F_{_B} |_{\bar \gamma} \, c^{-3} O ( r_{_P}^{-2 p} ).
\end{split}
\ee
Here $ | {F}_{_A} \cap {F}_{_B} |_{\bar \gamma} $ is the $(n-2)$-dimensional volume of the edge
$F_{_A} \cap F_{_B}$ in $ (\R^n, \bar g)$, and $ d \mu $ is the volume element with respect to the metric 
induced from $g$. 

Combining \eqref{eq-int-H} -- \eqref{eq-edge-contribution}
and \eqref{eq-edge-contribution-1}, we obtain the following proposition. 

\begin{prop} \label{prop-main}
Let $ c \in (0,1)$ be a fixed constant. 
Suppose the polyhedron $P$ satisfies 
\be \label{eq-angle-condition}
 \sin \bar \theta \ge c 
\ee
along each edge of $P$.
If $ r_{_P}$ is sufficiently large, then 
\be 
\begin{split}
& \ \int_{\p P }  ( g_{ij,i} - g_{ii,j} ) \bar \nu^j \, d \bar \sigma \\
= & \  - 2  \int_{\mathcal{F}} H \, d \sigma +
  2  \int_{ \mathcal{E} } ( \alpha - \bar \alpha ) \, d  \mu  \\
& \  +   c^{-3} | \mathcal{E} |_{\bar \gamma} \, O ( r_{_P}^{-2 p} ) + 
 |\F |_{\bar \gamma }  \, O ( r_{_P}^{- 2 p - 1} ) + | \F |_{\bar \gamma} \, O ( r_{_P}^{-2 p - 1} ).
\end{split}
\ee
Here $ \F$ and $ \E$ are the union of all the faces and edges of $ P$, respectively. 
\end{prop}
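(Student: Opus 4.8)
The plan is to assemble Proposition~\ref{prop-main} by summing the mean-curvature identity \eqref{eq-int-H} over all faces and then accounting for the edge terms that appear after integrating $\div_{\bar\gamma} X$ by parts. I would begin by fixing a face $F = F_{_A}$ of $\p P$. Since $F$ is totally geodesic in $(\R^n,\bar g)$, the background second fundamental form $\bar A$ vanishes, so Proposition~\ref{prop-mean-curvature} collapses to
\be
2 H = ( d\,\tr_{\bar g} h - \div_{\bar g} h)(\bar\nu) - \div_{\bar\gamma} X + O(|\bar D h|_{\bar g}\,|h|_{\bar g})
\ee
pointwise on $F$, which is exactly \eqref{eq-int-H} after integrating against $d\bar\sigma$ and observing that $(d\,\tr_{\bar g}h - \div_{\bar g}h)(\bar\nu) = (h_{ii,j} - h_{ij,i})\bar\nu^j$. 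Next I would convert the left side from $d\bar\sigma$ to $d\sigma$ using the decay estimates: $H = O(r_{_P}^{-p-1})$ from \eqref{eq-H-M03} and $d\sigma = (1+O(r_{_P}^{-p}))d\bar\sigma$ from \eqref{eq-g-decay}, which gives \eqref{eq-int-H-1} with error $|F|_{\bar\gamma}\,O(r_{_P}^{-2p-1})$. The quadratic error term on the right is controlled by \eqref{eq-dh-h}, again $|F|_{\bar\gamma}\,O(r_{_P}^{-2p-1})$. Summing over all faces then yields
\be
\int_{\p P}(g_{ij,i}-g_{ii,j})\bar\nu^j\,d\bar\sigma = -2\int_{\F} H\,d\sigma + \sum_A \int_{F_{_A}} \div_{\bar\gamma} X\,d\bar\sigma + |\F|_{\bar\gamma}\,O(r_{_P}^{-2p-1}).
\ee

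The second step is the edge accounting. By \eqref{eq-F-2-E} each $\int_{F_{_A}}\div_{\bar\gamma}X\,d\bar\sigma$ equals $\int_{\p F_{_A}} g(\bar\nu_{_A},\bar n_{_A})\,d\bar\mu$, and summing over faces reorganizes this as a sum over edges $F_{_A}\cap F_{_B}$ of the quantity in \eqref{eq-edge-contribution}, namely $\int_{F_{_A}\cap F_{_B}} \big(g(\bar\nu_{_A},\bar n_{_A}) + g(\bar\nu_{_B},\bar n_{_B})\big)\,d\bar\mu$. Here I rely on the fact that each interior edge of $\p P$ is shared by exactly two faces, so every boundary contribution is counted once with its pair. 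The core of the argument is then the pointwise identity, valid in both the convex and non-convex case under the angle hypothesis $\sin\bar\theta \ge c$, that
\be
g(\bar\nu_{_A},\bar n_{_A}) + g(\bar\nu_{_B},\bar n_{_B}) = 2(\bar\alpha - \alpha) + c^{-3}O(r_{_P}^{-2p}),
\ee
which the excerpt has already established through the case analysis culminating in \eqref{eq-basic-angle-convex}, \eqref{eq-basic-angle-non-convex}, the Taylor expansion \eqref{eq-mean-value-1}, and the relations \eqref{eq-alpha-theta-minus}, \eqref{eq-alpha-theta-minus-1}. Integrating this over the edge and switching from $d\bar\mu$ to $d\mu$ (the change costs only a higher-order factor since $\alpha - \bar\alpha = c^{-3}O(r_{_P}^{-p})$ by \eqref{eq-alpha-d} and the edge volume is $O(r_{_P}^{n-2})$) produces \eqref{eq-edge-contribution-1}.

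Finally I would collect the pieces: summing \eqref{eq-edge-contribution-1} over all edges contributes $2\int_{\E}(\alpha-\bar\alpha)\,d\mu$ plus an error $c^{-3}|\E|_{\bar\gamma}\,O(r_{_P}^{-2p})$, and combining with the face sum above gives precisely the claimed formula
\be
\int_{\p P}(g_{ij,i}-g_{ii,j})\bar\nu^j\,d\bar\sigma = -2\int_{\F} H\,d\sigma + 2\int_{\E}(\alpha-\bar\alpha)\,d\mu + c^{-3}|\E|_{\bar\gamma}\,O(r_{_P}^{-2p}) + |\F|_{\bar\gamma}\,O(r_{_P}^{-2p-1}).
\ee
The one technical point that deserves care — and the step I expect to be the main obstacle — is the bookkeeping that reassembles $\sum_A \int_{\p F_{_A}} g(\bar\nu_{_A},\bar n_{_A})\,d\bar\mu$ into a clean sum over edges: one must verify that the boundary of each face decomposes into edges of $P$ with no leftover pieces, that each interior edge is adjacent to exactly two faces so that the pairing in \eqref{eq-edge-contribution} is exhaustive, and that the outward conormals $\bar n_{_A}, \bar n_{_B}$ along a shared edge are oriented consistently with the convex/non-convex sign conventions fixed in \eqref{eq-alpha-theta-convex} and its non-convex analogue. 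Once this combinatorial identification is in place, the rest is a matter of collecting the already-derived error terms, noting that $|\F|_{\bar\gamma}\,O(r_{_P}^{-2p-1})$ appears twice (from \eqref{eq-int-H-1} and \eqref{eq-dh-h}) and can simply be written as two summands as in the statement.
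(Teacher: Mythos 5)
Your proposal is correct and follows essentially the same route as the paper: it assembles \eqref{eq-int-H}--\eqref{eq-edge-contribution} and \eqref{eq-edge-contribution-1} in exactly the order the text does, including the face/edge bookkeeping and the convex versus non-convex case analysis. The only blemish is a sign slip in your intermediate display: since $(g_{ij,i}-g_{ii,j})\bar\nu^j=-(h_{ii,j}-h_{ij,i})\bar\nu^j$, the divergence term should enter as $-\sum_{A}\int_{F_{_A}}\div_{\bar\gamma}X\,d\bar\sigma$, which is precisely what makes the edge contribution come out as $+2\int_{\E}(\alpha-\bar\alpha)\,d\mu$ in your (correct) final formula.
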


Theorem \ref{thm-main} now follows from Proposition \ref{prop-main}. 
Take $P = P_k$, an element  in $\{ P_k \}$.
Since 
$$ | \sin \bar \alpha | = \sin \bar \theta , $$ 
condition d) is equivalent to $\sin \bar \theta \ge c$.
Therefore, by conditions a), b), c) and Proposition \ref{prop-main}, 
\be \label{eq-main-f}
\begin{split}
& \ \int_{\p P_k }  ( g_{ij,i} - g_{ii,j} ) \bar \nu^j \, d \bar \sigma \\
= & \  - 2  \int_{\F (\p P_k) } H \, d \sigma +  2  \int_{ \E (\p P_k)  } ( \alpha - \bar \alpha ) \, d  \mu  
+ o ( 1), \ \text{as} \ k \to \infty. 
\end{split}
\ee
Here we also used the decay condition $ p > \frac{n-2}{2}$.
Equation \eqref{eq-main} follows from \eqref{eq-mass-def-p} and \eqref{eq-main-f}.

\section{Integration of a lower dimensional mass-related quantity} \label{sec-cube}

We next consider the case in which $\lbrace P_k\rbrace$ is a sequence of large coordinate cubes. 
Cubes have a feature that, when sliced by hyperplanes parallel to a face, 
the resulting sections are $(n-1)$-dimensional large cubes as well. 
The following formula was derived in \cite[Equation (6)]{Miao19}.

\begin{thm}[\cite{Miao19}]
Let $(M^3, g)$ be an asymptotically flat $3$-manifold. 
Let $ C^3_L $ denote a large coordinate cube centered at the coordinate origin, 
with coordinate side length $2L$.
For each $ k=1, 2, 3$ and  each $ t \in [- L, L]$, 
let $S_t^{(k)}$ be the curve given by the intersection between $\partial C^3_L$ 
and the coordinate plane  $\lbrace x_k=t\rbrace$.
Then the mass of $(M^3,g)$ satisfies 
\begin{equation}\label{eq-GB}
\m (g) = \frac{1}{8\pi}\sum_{k=1}^3 \int_{-L}^L \m^{(2)}_k  ( t, L) \, dt + o (1) , \ \text{as} \ L \to \infty,
\end{equation}
where 
\be
\m^{(2)}_k (t, L) =  2\pi -\int_{S_t^{(k)}}\kappa^{(k)}ds -\beta^{(k)}_t  , 
\ee
$\kappa^{(k)}$ is the geodesic curvature of $S_t^{(k)}$ in $\lbrace x_k=t\rbrace$, 
and $\beta^{(k)}_t$ is the sum of the turning angle of $S_t^{(k)}$ at its four vertices. 
\end{thm}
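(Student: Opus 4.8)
The plan is to derive \eqref{eq-GB} by combining Theorem~\ref{thm-main} with the Gauss--Bonnet theorem applied to the coordinate plane sections of the cube. First I would check that the cubes $C^3_L$ meet conditions a)--d): they enclose the origin, $r_{C^3_L} = L \to \infty$, $|\F(\p C^3_L)| = O(L^2)$, $|\E(\p C^3_L)| = O(L)$, and every Euclidean dihedral angle equals $\tfrac{\pi}{2}$, so that $\sin \bar\alpha \equiv 1$. With $n = 3$ and $\omega_2 = 4\pi$, \eqref{eq-main} specializes to
\be
\m(g) = \frac{1}{8\pi} \left( -\int_{\F(\p C^3_L)} H \, d\sigma + \int_{\E(\p C^3_L)} (\alpha - \bar\alpha) \, d\mu \right) + o(1), \qquad L \to \infty,
\ee
so it suffices to show that $\sum_{k=1}^3 \int_{-L}^L \m^{(2)}_k(t, L) \, dt$ equals the expression in parentheses up to an $o(1)$ error. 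For fixed $k$ and $t \in (-L, L)$, the section $D^{(k)}_t = C^3_L \cap \{ x_k = t \}$ is a geodesic $4$-gon in the surface $(\{ x_k = t \}, g|_{\{ x_k = t \}})$ with boundary $S^{(k)}_t$, its four vertices lying on the four edges of $C^3_L$ parallel to $x_k$; Gauss--Bonnet then gives $\m^{(2)}_k(t, L) = 2\pi - \int_{S^{(k)}_t} \kappa^{(k)} \, ds - \beta^{(k)}_t$, which is exactly the quantity to be matched to the face and edge integrals above.

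For the faces: each face $F$ of $\p C^3_L$ is parallel to exactly two coordinate axes, and for each such direction $x_k$ the segments $\sigma_t = F \cap \{ x_k = t \}$ foliate $F$ as $t$ varies. Choosing a $g$-orthonormal frame $\{ T, S \}$ tangent to $F$ with $T$ tangent to $\sigma_t$, one computes from Lemma~\ref{lem-0-order-geometry} and the decay \eqref{eq-g-decay} that the geodesic curvature of $\sigma_t$ inside $\{ x_k = t \}$ equals $A(T, T)$, the entry of the second fundamental form of $F$ in $(M, g)$ with respect to the outward normal, up to an error $O(r_{C^3_L}^{-2p-1})$. Summing over the two directions parallel to $F$ converts $A(T, T) + A(S, S)$ into $H$, and a co-area change of variables on $F$, whose induced area element is $(1 + O(r_{C^3_L}^{-p})) \, ds \, dt$, gives
\[
\sum_{k \, : \, x_k \parallel F} \int_{-L}^L \!\! \int_{S^{(k)}_t \cap F} \kappa^{(k)} \, ds \, dt = \int_F H \, d\sigma + o(1).
\]
Since $S^{(k)}_t$ is the union of its four segments, one lying on each face parallel to $x_k$, summing over all six faces yields $\sum_{k=1}^3 \int_{-L}^L \int_{S^{(k)}_t} \kappa^{(k)} \, ds \, dt = \int_{\F(\p C^3_L)} H \, d\sigma + o(1)$.

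For the edges and the constant term: each edge $e$ of $\p C^3_L$ is parallel to exactly one axis $x_k$ and occurs as a single vertex of $D^{(k)}_t$ for the corresponding range of $t$. Because $e$ is Euclidean-orthogonal to $\{ x_k = t \}$ and $\sin \bar\alpha = 1$, the interior angle of the geodesic $4$-gon $D^{(k)}_t$ at that vertex agrees with the dihedral angle $\alpha$ of $(C^3_L, g)$ along $e$ up to $O(r_{C^3_L}^{-2p})$, by the expansions that produced \eqref{eq-E}--\eqref{eq-alpha-d}. Hence $\beta^{(k)}_t = 4\pi - \sum_{j=1}^4 \alpha_j + O(r_{C^3_L}^{-2p})$ and, since $\bar\alpha = \tfrac{\pi}{2}$, $2\pi - \beta^{(k)}_t = \sum_{j=1}^4 (\alpha_j - \bar\alpha) + O(r_{C^3_L}^{-2p})$. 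Integrating in $t$, absorbing the length-element Jacobian and using $\alpha - \bar\alpha = O(r_{C^3_L}^{-p})$ from \eqref{eq-alpha-d}, then summing over $k$ so that the twelve edges are counted once each, gives $\sum_{k=1}^3 \int_{-L}^L (2\pi - \beta^{(k)}_t) \, dt = \int_{\E(\p C^3_L)} (\alpha - \bar\alpha) \, d\mu + o(1)$. Combining this with the face identity and the specialization of Theorem~\ref{thm-main} above proves \eqref{eq-GB}.

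I expect the only genuine difficulty to be the error accounting: one must confirm that each $O(r^{-p})$ discrepancy --- between the slice geodesic curvature and $A(T, T)$, between the slice interior angle and the dihedral angle, and between $ds \, dt$ and the induced area and length measures --- contributes only $o(1)$ after integration over faces of Euclidean area $O(L^2)$ and edges of Euclidean length $O(L)$; this works precisely because $p > \tfrac{n-2}{2} = \tfrac12$ in dimension three. A secondary, purely bookkeeping issue is keeping the sign conventions for $H$, for $\kappa^{(k)}$ under the Gauss--Bonnet orientation, and for the turning angles mutually consistent.
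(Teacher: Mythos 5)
Your proposal is correct, and it is essentially the argument the paper uses: the paper does not reprove this $3$-dimensional statement (it cites \cite[Equation (6)]{Miao19}), but its proof of the higher-dimensional analog, Theorem \ref{thm-prop}, proceeds exactly as you do --- specialize Theorem \ref{thm-main} to coordinate cubes, then slice each face and edge by the planes $\{x_k=t\}$ and match the sliced geodesic curvatures to $H$ and the sliced interior angles to $\alpha$ (this is Lemma \ref{lem}, which for $n=3$ reduces to your identities $\kappa^{(k)}=A(T,T)+O(L^{-2p-1})$ and $\tilde\alpha^{(k)}=\alpha+O(L^{-2p})$), with the error bookkeeping closed by $p>\tfrac12$. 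The only cosmetic slip is calling $D^{(k)}_t$ a geodesic $4$-gon and invoking Gauss--Bonnet: the sides have nonzero geodesic curvature, and $\m^{(2)}_k(t,L)=2\pi-\int\kappa^{(k)}\,ds-\beta^{(k)}_t$ is simply the definition to be matched, with Gauss--Bonnet needed only for its interpretation as an angle defect.
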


As noted in \cite{Miao19}, the quantity $ \m_k^{(2)} (t, L) $ 
can be interpreted as an angle defect of the surface delimited by $S_t^{(k)}$ in $\{ x_k = t \}$. 
In the setting of asymptotically conical surfaces, it is known that this angle defect defines 
the $2$-d ``mass" of such surfaces (see \cite{CL2019} and \cite{Chr} for instance).   

\vspace{.1cm}

Combined with the Gauss-Bonnet formula and the work of Stern \cite{Stern19},  \eqref{eq-GB}
can be used to explain the recent proof of the $3$-dimensional positive mass theorem in \cite{BKKS19}. 
Motived by this, below we establish a higher dimensional analog of \eqref{eq-GB}. 

\begin{thm}\label{thm-prop}
Let $(M^n, g)$ be an asymptotically flat manifold with dimension $ n \ge 4$.
Given any index $k \in \{ 1, \cdots, n \}$,  any large constant $L$, and any $ t \in [-L, L]$,  
there is a quantity $ \m_k^{(n-1)} (t, L)$, associated to the coordinate hyperplane 
$\{ x_k = t \}$, defined in \eqref{eq-def-mktl} below, such that 
the mass of $(M^n, g)$ satisfies 
\begin{equation}\label{eq-slice}
(n-1) \omega_{n-1}  \m (g)= \omega_{n-2}  \sum_{k=1}^n \int_{-L}^L \m^{(n-1)}_k ( t, L ) \, dt + o (1) , 
\ \text{as} \ L \to \infty. 
\end{equation}
\end{thm}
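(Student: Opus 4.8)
The plan is to apply the mass formula \eqref{eq-mass-def-p} twice --- once to $(M^n,g)$ along the exhaustion by large coordinate cubes, and once, in dimension $n-1$, along each coordinate hyperplane $\{x_k=t\}$ --- and to pass between the two by a coarea decomposition of the boundary of the cube.

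Fix a large $L$ and let $C^n_L=\{\,|x_i|\le L,\ 1\le i\le n\,\}$, with $(n-1)$-dimensional faces $F^\pm_l=\{x_l=\pm L\}\cap C^n_L$. For $k\in\{1,\dots,n\}$ and $t\in[-L,L]$, put $Q^{(k)}_t=C^n_L\cap\{x_k=t\}$: an $(n-1)$-dimensional coordinate cube in $\{x_k=t\}$ whose $(n-2)$-dimensional faces are $F^\pm_l\cap\{x_k=t\}$ for $l\ne k$. Since $g|_{\{x_k=t\}}$ is asymptotically flat of dimension $n-1$ with the same decay exponent $p>\tfrac{n-2}{2}>\tfrac{n-3}{2}$, applying the $(n-1)$-dimensional analogue of \eqref{eq-mass-def-p} to the exhaustion $\{Q^{(k)}_t\}$ leads one to set
\begin{equation*}
\m^{(n-1)}_k(t,L)=\frac{1}{2(n-2)\omega_{n-2}}\int_{\partial Q^{(k)}_t}\bigl(g_{ab,a}-g_{aa,b}\bigr)\,\bar\nu^b\,d\bar\sigma,
\end{equation*}
where $a,b$ run over $\{1,\dots,n\}\setminus\{k\}$, $\partial Q^{(k)}_t$ is the relative boundary of $Q^{(k)}_t$ in $\{x_k=t\}$, and $\bar\nu$ is its Euclidean outward unit normal; this is the quantity \eqref{eq-def-mktl}, and by the $(n-1)$-dimensional form of \eqref{eq-mass-def-p} it satisfies $\m^{(n-1)}_k(t,L)\to\m(g|_{\{x_k=t\}})$ as $L\to\infty$. (By Proposition \ref{prop-main} in dimension $n-1$ this flux equals, up to an error disappearing in the limit, the total mean curvature of the $(n-2)$-faces of $Q^{(k)}_t$ plus the total dihedral-angle defect along its $(n-3)$-edges, which for $n=3$ is the Gauss--Bonnet expression of the earlier theorem.) On the other hand, coordinate cubes satisfy conditions a) and b) of Theorem \ref{thm-main}, so \eqref{eq-mass-def-p} yields
\begin{equation*}
\int_{\partial C^n_L}\bigl(g_{ij,i}-g_{ii,j}\bigr)\,\bar\nu^j\,d\bar\sigma=2(n-1)\omega_{n-1}\,\m(g)+o(1)\qquad\text{as }L\to\infty .
\end{equation*}

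The next step is the coarea decomposition. On a flat face $F^\pm_l$ and for any $k\ne l$, the coordinate $x_k$ restricted to $F^\pm_l$ has unit Euclidean gradient, so $\int_{F^\pm_l}\phi\,d\bar\sigma=\int_{-L}^L\bigl(\int_{F^\pm_l\cap\{x_k=t\}}\phi\,d\bar\sigma\bigr)dt$ for any continuous $\phi$, with no Jacobian factor. Because $\partial Q^{(k)}_t=\bigcup_{l\ne k}(F^+_l\cup F^-_l)\cap\{x_k=t\}$ and $\bar\nu=\pm\partial_{x_l}$ on $F^\pm_l\cap\{x_k=t\}$, integrating the defining formula of $\m^{(n-1)}_k(t,L)$ over $t\in[-L,L]$, summing over $k$, and interchanging the two finite sums $\sum_k\sum_{l\ne k}=\sum_l\sum_{k\ne l}$ gives
\begin{equation*}
2(n-2)\omega_{n-2}\sum_{k=1}^n\int_{-L}^L\m^{(n-1)}_k(t,L)\,dt=\sum_l\sum_{k\ne l}\Bigl(\int_{F^+_l}-\int_{F^-_l}\Bigr)\Bigl(\sum_{a\ne k}g_{al,a}-\sum_{a\ne k}g_{aa,l}\Bigr)d\bar\sigma ,
\end{equation*}
where $\bigl(\int_{F^+_l}-\int_{F^-_l}\bigr)\Psi\,d\bar\sigma$ abbreviates $\int_{F^+_l}\Psi\,d\bar\sigma-\int_{F^-_l}\Psi\,d\bar\sigma$.

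Finally I would do the index count. Writing $D_l=g_{il,i}-g_{ii,l}$ (summed over all $i$) for the ADM integrand on $F^\pm_l$, one has $\sum_{a\ne k}g_{al,a}-\sum_{a\ne k}g_{aa,l}=D_l-(g_{kl,k}-g_{kk,l})$; since the $k=l$ term of $g_{kl,k}-g_{kk,l}$ vanishes, $\sum_{k\ne l}(g_{kl,k}-g_{kk,l})=D_l$, and hence $\sum_{k\ne l}\bigl(D_l-(g_{kl,k}-g_{kk,l})\bigr)=(n-1)D_l-D_l=(n-2)D_l$. Thus the right-hand side above equals
\begin{equation*}
(n-2)\sum_l\Bigl(\int_{F^+_l}-\int_{F^-_l}\Bigr)D_l\,d\bar\sigma=(n-2)\int_{\partial C^n_L}\bigl(g_{ij,i}-g_{ii,j}\bigr)\,\bar\nu^j\,d\bar\sigma=(n-2)\bigl(2(n-1)\omega_{n-1}\,\m(g)+o(1)\bigr),
\end{equation*}
and dividing by $2(n-2)$ gives \eqref{eq-slice}. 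There is no analytic difficulty here: the decay hypotheses enter only through the already-proved flux identity \eqref{eq-mass-def-p}, and the coarea formula is used on flat cubes where the slicing coordinate has unit gradient. The step needing the most care is to arrange \eqref{eq-def-mktl} so that it is at once the honest ADM flux of the slice $\{x_k=t\}$ --- which needs $g|_{\{x_k=t\}}$ to be asymptotically flat with exponent $p>\tfrac{n-3}{2}$ --- and so that it feeds cleanly into the elementary identity $\sum_{k\ne l}\bigl(D_l-(k\text{-th term})\bigr)=(n-2)D_l$, which is exactly what produces the constants $\omega_{n-2}$ and $(n-1)\omega_{n-1}$ in \eqref{eq-slice}; if instead \eqref{eq-def-mktl} is phrased via mean curvature and dihedral defects (as for $n=3$), one first absorbs the difference using Proposition \ref{prop-main} on each slice, the $t$-integral of the resulting error being $O(L^{n-2-2p})=o(1)$ by $p>\tfrac{n-2}{2}$.
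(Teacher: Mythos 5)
Your proof is correct, but it routes the argument differently from the paper. The paper works throughout with the geometric quantities appearing in \eqref{eq-def-mktl}: its key step is Lemma \ref{lem}, which shows that the mean curvatures $\tilde H^{(k)}_i$ of the slice faces sum over $k\ne i$ to $(n-2)H_i+O(L^{-2p-1})$ and that the slice dihedral angles agree with the ambient ones up to $O(L^{-2p})$; it then integrates these relations over the faces and edges of $\partial C^n_L$ by Fubini and invokes Theorem \ref{thm-main} for the cube. You instead work with the ADM flux integrand: your central identity $\sum_{k\ne l}\sum_{a\ne k}(g_{al,a}-g_{aa,l})=(n-2)\sum_i(g_{il,i}-g_{ii,l})$ is exact and purely combinatorial, you use \eqref{eq-mass-def-p} for the cube rather than Theorem \ref{thm-main}, and you reconcile your flux definition of $\m_k^{(n-1)}(t,L)$ with the paper's \eqref{eq-def-mktl} by applying Proposition \ref{prop-main} to each $(n-1)$-dimensional slice (correctly using that every point of the relative boundary of the slice has $\max_{i\ne k}|x_i|=L$, so the per-slice error is uniform in $t$ and the accumulated error is $O(L^{n-2-2p})=o(1)$). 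Your version buys an error-free middle step and avoids the geometric estimates of Lemma \ref{lem}; the paper's version works directly with the quantity named in the statement and makes the factor $(n-2)$ visible as a count of how many slicing directions see each face and edge --- the two index counts are the same computation in different clothing. The one point to insist on is that your displayed definition of $\m_k^{(n-1)}(t,L)$ is not literally \eqref{eq-def-mktl}, and the theorem fixes the latter; so the slice-wise application of Proposition \ref{prop-main} that you sketch only in your final sentence is a necessary component of the proof rather than an optional variant.
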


To explain the quantity $\m_k^{(n-1)}( t, L) $, we first introduce some notations. 
Given a  large constant $L$,  let $C^n_L$ denote the coordinate cube in $(M, g)$, centered 
at the coordinate origin, with side length $2L$. 
For each $ i \in \{ 1, \cdots, n \}$,  let 
$$F^{(i)}_+=\lbrace x\in \partial C^n_L \,|\, x^i=  L \rbrace \ \ \text{and} \ \
F^{(i)}_-=\lbrace x\in \partial C^n_L \,|\, x^i= - L \rbrace , $$
which represent the front and back $i$-th face of $\p C^n_L$, respectively. 
Let $H_{i}$ denote the mean curvature of $F^{(i)}_\pm$ with respect to the outward normal $ \nu_i$
in $(M, g)$.
As before, we use $\alpha $ to denote the dihedral angle 
along every edge of $\p C^n_L$ in $(M, g)$. 

For each $ t  \in [-L, L]$,  
let $S_t^{(k)}$ be the intersection of $\partial C^n_L$ with the coordinate hyperplane $\{ x_k = t \}$, i.e.
$$S_t^{(k)}=\partial C^n_L\cap \lbrace x_k=t\rbrace.$$
$S_t^{(k)}$ is the boundary of an $(n-1)$-dimensional  cube in $\{ x_k = t \}$. (See Figure \ref{fig-cube}.)

\begin{figure}[ht]
\centering
\includegraphics[scale=.5]{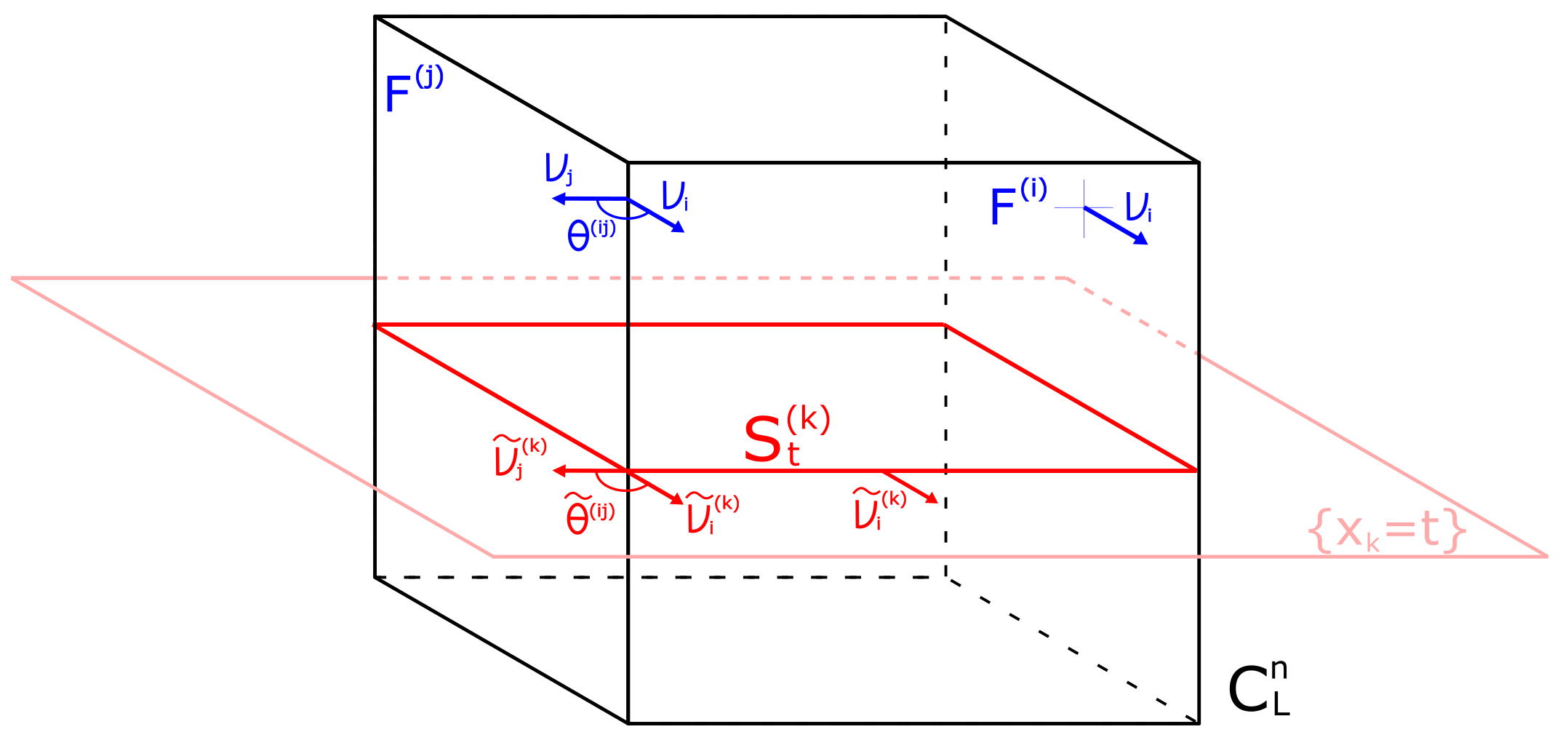}
\caption{$S^{(k)}_t$ is the boundary of an $(n-1)$-dimensional cube.}
\label{fig-cube}
\end{figure}

Within the hypersurface $\{ x_k = t \}$,  let  $\tilde{H}^{(k)}$ denote the mean curvature of 
each face of $S_t^{(k)}$ with respect to the outward normal $  \nu^{(k)}_i$.
Let $\tilde{\alpha}^{(k)}$ denote the dihedral angle along every edge of $ S^{(k)}_t$ in $\{ x_k = t \}$, with respect to $g$.

For  $ m \in \{ n-1, n-2 \}$, 
let $ d \sigma^m$, $ d \mu^{m-1}$, $ d \sigma^m_0$ and $ d \mu^{m-1}_0$  denote the relevant volume forms, 
induced from the metrics $g$ and $\bar g$, on an $m$-dimensional face and an $(m-1)$-dimensional edge of the corresponding cubes, respectively. 

Associated to each $ S^{(k)}_t$ in $\{ x_k = t \}$, define 
\be \label{eq-def-mktl}
\m_k^{(n-1)} (t, L) =  \frac{1}{ (n- 2)\omega_{n-2}  } \left(  - \int_{ \F ( S^{(k)}_t ) } \tilde{H}^{(k)} d\sigma^{n-2}  + \int_{\mathcal{E}(S_t^{(k)})}\( \tilde\alpha^{(k)} - \frac{\pi}{2}\right)d\mu^{n-3} \right) .
\ee

\begin{remark}
For each fixed $k$ and $t$, as a special case of Theorem \ref{thm-main}, we have
\be
\m ( g|_{  \{ x_k = t \} } ) = \lim_{ L \to \infty } \m_k^{(n-1)} (t, L) ,
\ee
where $ g |_{\{ x_k = t \} }$ denotes the induced metric on $\{ x_k = t \}$ from $g$.
In many cases, for instance if $g$ has a decay rate of $ p = n-2 $,  this limit will be zero
as $ \m ( g |_{ \{ x_k = t \} } ) = 0 $.
\end{remark}

To prove Theorem \ref{thm-prop}, we first relate the mean curvatures and the dihedral angles of 
$S^{(k)}_t$ in $\{ x_k = t \}$, $ k =1, \cdots, n $, to those of $ \p C^n_L $ in $(M, g)$.

\begin{lma}\label{lem}
For any $ i , k \in \{ 1, \cdots, n \}$ with $ i \ne k $, let  
$\tilde{H}^{(k)}_i$ be the mean curvature of $S_t^{(k)}\cap F^{(i)}_\pm$ in $\lbrace x_k=t\rbrace$
with respect to $  \nu^{(k)}_i$. Then 
\begin{equation} \label{eq-face} 
\sum_{k \in \, \{ 1, \cdots, n \} \setminus \{ i \} }
\tilde{H}^{(k)}_i=(n-2)H_{i}+ O(L^{-2p-1}), \ \text{as} \ L \to \infty. 
\end{equation}
Similarly, along each edge of $ S^{(k)}_t$ in $\{ x_k = t \}$, 
\begin{equation}\label{eq-angle}
\tilde{\alpha}^{(k)} =\alpha + O(L^{-2p}), \ \text{as} \ L \to \infty. 
\end{equation}
\end{lma}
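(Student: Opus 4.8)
The plan is to carry out an explicit comparison of the two geometric setups: the $(n-1)$-dimensional cube $S_t^{(k)}$ sitting inside the hyperplane $\{x_k = t\}$ with the induced metric, versus the face $F_\pm^{(i)}$ of the $n$-dimensional cube $\partial C_L^n$ with the metric $g$. The key observation is that the faces and edges of $S_t^{(k)}$ are themselves faces and edges of $\partial C_L^n$ (restricted to the slice), so both the mean curvatures $\tilde H_i^{(k)}$ and $H_i$, and the dihedral angles $\tilde\alpha^{(k)}$ and $\alpha$, can be read off from the metric coefficients $g_{ij} = \delta_{ij} + h_{ij}$ in the ambient coordinate chart, with $h_{ij} = O(L^{-p})$, $\partial h_{ij} = O(L^{-p-1})$ near $\partial C_L^n$. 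The strategy is to compute each of these curvature/angle quantities to first order in $h$ using the same first-order expansions already established (essentially Lemma \ref{lem-0-order-geometry} and the formula \eqref{eq-H-M03} for the mean curvature difference), and then observe that the sum over $k$ telescopes because the Euclidean background contributions all vanish and the $h$-linear contributions recombine correctly.

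For \eqref{eq-face}: fix $i$ and work near a point of the face $F_\pm^{(i)}$, which lies in the hyperplane $\{x_i = \pm L\}$ with Euclidean normal $e_i$. The mean curvature $H_i$ of this face in $(M,g)$, relative to the flat value $\bar H_i = 0$, is given by \eqref{eq-H-M03}: $H_i = \tfrac12 h_{nn;n} - (\div_{\bar g} h - \tfrac12 d\,\tr_{\bar g} h)_n + \cdots$ with $n$ replaced by the index $i$, and the error is $O(L^{-2p-1})$. For the slice, $\tilde H_i^{(k)}$ is the mean curvature of the $(n-2)$-dimensional face $S_t^{(k)} \cap F_\pm^{(i)}$ inside the $(n-1)$-manifold $\{x_k = t\}$ (with its induced metric $g|_{\{x_k=t\}}$), again relative to Euclidean normal $e_i$; the same formula \eqref{eq-H-M03} applies, but now the trace and divergence are taken over only the $n-1$ coordinate directions $\{1,\dots,n\}\setminus\{k\}$, i.e. the $x_k$-direction is omitted from the sums. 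Writing both expressions out to first order in $h$, summing $\tilde H_i^{(k)}$ over $k \in \{1,\dots,n\}\setminus\{i\}$, one finds that each missing term reinserted summed over $k$ reproduces $(n-2)$ copies of the full ambient expression $H_i$ plus terms involving $h_{ki;k}$-type contributions that, when summed, rebuild the divergence/trace over all $n$ directions — this is the routine bookkeeping step I would not grind through, but the counting is: there are $n-2$ slices, each omitting one direction $k \neq i$, so each ambient summand $h_{\ell\ell}$ with $\ell \neq i$ appears in $n-3$ of the slice sums as a present term and the discrepancies recombine. I would verify the coefficients match by a direct index count, yielding exactly $(n-2)H_i + O(L^{-2p-1})$.

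For \eqref{eq-angle}: an edge of $S_t^{(k)}$ is the intersection of two adjacent faces $F_\pm^{(i)} \cap F_\pm^{(j)}$ further intersected with $\{x_k = t\}$; its dihedral angle $\tilde\alpha^{(k)}$ inside $\{x_k = t\}$ is determined by $g(\nu_i^{(k)}, \nu_j^{(k)})$ where the normals are taken within the induced metric on the slice. But the Euclidean normals $e_i, e_j$ to these faces lie in the slice hyperplane $\{x_k = t\}$ (since $i, j \neq k$), so by the computation in Section 3 — specifically \eqref{eq-E} giving $\cos\theta - \cos\bar\theta = \tfrac12\cos\bar\theta(a_ia_jh_{ij} + b_ib_jh_{ij}) - a_ib_jh_{ij} + O(L^{-2p})$, where here $a = e_i$, $b = e_j$ so $a_ia_jh_{ij} = h_{ii}$, etc. — the leading correction to the cosine of the dihedral angle depends only on the components $h_{ii}, h_{jj}, h_{ij}$, which are unaffected by whether one computes in the ambient metric or the induced slice metric. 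Hence $\cos\tilde\alpha^{(k)}$ and $\cos\alpha$ agree up to $O(L^{-2p})$, and since both angles are bounded away from $0$ and $\pi$ (the Euclidean angle is $\pi/2$), inverting $\arccos$ gives $\tilde\alpha^{(k)} = \alpha + O(L^{-2p})$ as in \eqref{eq-alpha-d}.

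The main obstacle I anticipate is the index bookkeeping in \eqref{eq-face}: making sure that the partial traces and partial divergences (over $n-1$ directions) really do sum to the right multiple of the full ambient mean curvature without a leftover anomaly. The cleanest way to handle this is to write $H_i$ and each $\tilde H_i^{(k)}$ explicitly as linear functionals of $\{h_{\ell m;p}\}$ using \eqref{eq-H-M03}, subtract, and check that $\sum_k (\tilde H_i^{(k)} - H_i)$ vanishes to the claimed order term by term; because every term in \eqref{eq-H-M03} is a sum over an index range that differs between ambient and slice only by the single omitted direction $k$, and because there are exactly $n-2$ choices of $k$, the combinatorics closes. I would also double-check that the induced-metric volume forms and the Euclidean-to-$g$ conversions (as in \eqref{eq-int-H-1}) do not introduce additional first-order terms at the level of the mean curvature itself — they do not, since those conversions only affect the integration measures, which is handled separately when assembling \eqref{eq-slice}.
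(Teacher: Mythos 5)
Your overall strategy is the paper's: read both mean curvatures and both dihedral angles off the metric coefficients to first order in $h$, and for \eqref{eq-face} exchange the order of summation between the slice index $k$ and the tangential index of the trace. Your treatment of \eqref{eq-angle} is essentially identical to the paper's (apply \eqref{eq-E} in the ambient cube and in the slice, note both give $\cos = -h_{ij}+O(L^{-2p})$, and invert the cosine near $\pi/2$), and is correct.

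However, the one piece of explicit content you give for \eqref{eq-face} --- the combinatorial count --- is off by one in two places, and as stated it would produce $(n-3)H_i$ rather than $(n-2)H_i$. The index $k$ ranges over $\{1,\dots,n\}\setminus\{i\}$, so there are $n-1$ slices, not $n-2$; and for a fixed tangential direction $\alpha\neq i$, the term $-g^{\alpha\alpha}\langle\nabla_{\partial_\alpha}\partial_\alpha,\nu_i\rangle$ appears in exactly those slices with $k\notin\{i,\alpha\}$, i.e.\ in $n-2$ of them, not $n-3$. This $n-2$ is precisely the source of the factor in \eqref{eq-face}. The paper makes this transparent by working directly with the definition of the mean curvature as a trace of the second fundamental form: the off-diagonal terms $g^{\alpha\beta}\langle\nabla_{\partial_\alpha}\partial_\beta,\nu^{(k)}_i\rangle$ with $\alpha\neq\beta$ are discarded into $O(L^{-2p-1})$ (since $g^{\alpha\beta}=O(L^{-p})$ there), $\nu^{(k)}_i$ is replaced by $\nu_i$ at the same cost, and then
\begin{equation*}
\sum_{k\neq i}\ \sum_{\alpha\notin\{i,k\}}(-1)g^{\alpha\alpha}\langle\nabla_{\partial_\alpha}\partial_\alpha,\nu_i\rangle
=\sum_{\alpha\neq i}\ \sum_{k\notin\{i,\alpha\}}(-1)g^{\alpha\alpha}\langle\nabla_{\partial_\alpha}\partial_\alpha,\nu_i\rangle
=(n-2)H_i+O(L^{-2p-1}).
\end{equation*}
Your alternative route through \eqref{eq-H-M03} does close (the $h_{ii;i}$ contributions cancel and each term indexed by $\ell\neq i$ acquires the multiplicity $n-2$), but only with the corrected count; since you explicitly deferred this verification and the version you wrote down is inconsistent with the claimed conclusion, you should carry out the reindexing before regarding the proof as complete.
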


\begin{proof}
It suffices to check \eqref{eq-face} and \eqref{eq-angle} on  $  F^{(i)}_+$ and along 
$ F^{(i)}_+ \cap F^{(j)}_+ $, where $ j \ne i $.
For any $ k \ne i $, at $S_t^{(k)}\cap F^{(i)}_+$, we have $ \nu_{i} =  \p_i + O ( L^{-p} )$ and 
$ \nu^{(k)}_i =  \p_i + O ( L^{-p} )$. Thus, 
\be \label{eq-ki-normal}
\nu_{i}= {\nu}^{(k)}_i+O(L^{p}) . 
\ee  
By definition, we have
\begin{equation}
\begin{split}
\sum_{k \in \, \{ 1, \cdots, n \} \setminus \{ i \}  } \tilde{H}^{(k)}_i  & = 
 \sum_{ k \in \, \{ 1, \cdots, n \} \setminus \{ i \}  } \ 
\sum_{\alpha,\beta \in \{ 1, \cdots, n \} \setminus \{ i,k \} } \ 
(-1) g^{\alpha\beta}\langle \nabla_{\partial_\alpha}\partial_\beta, {\nu}^{(k)}_i \rangle \\
&=  \sum_{ k \in \, \{ 1, \cdots, n \} \setminus \{ i \}  } \ 
\sum_{\alpha  \in \{ 1, \cdots, n \} \setminus \{ i,k \} } \, 
(-1) g^{\alpha \alpha }\langle \nabla_{\partial_\alpha}\partial_\alpha, {\nu}^{(k)}_i \rangle + O(L^{-2p-1})  \\
&=  \sum_{\alpha \in \{ 1, \cdots, n \} \setminus \{ i \} } \ 
\sum_{ k \in \{ 1, \cdots, n \} \setminus \{ i, \alpha \} } \,  
(-1) g^{\alpha \alpha}\langle \nabla_{\partial_\alpha}\partial_\alpha, {\nu}^{(k)}_i \rangle + O(L^{-2p-1})   .
\end{split}
\ee
Applying  \eqref{eq-ki-normal}, we have 
\be
\begin{split}
\sum_{k \in \, \{ 1, \cdots, n \} \setminus \{ i \}  } \tilde{H}^{(k)}_i  &=  \sum_{\alpha \in \{ 1, \cdots, n \} \setminus \{ i \} }  \, 
(n-2) \, (-1) g^{\alpha \alpha }\langle \nabla_{\partial_\alpha}\partial_\alpha,\nu_{ i }\rangle+ O(L^{-2p-1}) \\
&=(n-2)H_i+ O(L^{-2p-1}) ,
\end{split}
\end{equation}
which proves \eqref{eq-face}.

To prove \eqref{eq-angle}, by \eqref{eq-alpha-theta}, it suffices to check the corresponding relation for 
$\theta^{(ij)} $ and $ \tilde \theta^{(ij)} $. Here 
$\theta^{(ij)}$ is the angle between $ \nu_i$ and $ \nu_{j}$ along  the edge $ F^{(i)}_+ \cap F^{(j)}_{+}  $, 
and $\tilde \theta^{(ij)} $ is the angle between $ \nu^{(k)}_i$ and $ \nu^{(k)}_j $ along 
$ F^{(i)}_+ \cap F^{(j)}_{+}  \cap S_t^{(k)}$
(see Figure \ref{fig-cube}).

Applying \eqref{eq-E} to $C^n_L$ along $ F^{(i)}_+ \cap F^{(j)}_{+}  $ and noticing $ \bar \theta = \frac{\pi}{2} $ in this case,
we have
\begin{equation} \label{eq-theta-h}
\cos(\theta^{(ij)} ) = - h_{ij} + O (L^{-2p}) .
\ee
The same reason applied to $\{ x_k = t \}$ also gives 
\begin{equation}
\begin{split}
\cos(\tilde \theta^{(ij)} ) = - h_{ij} + O (L^{-2p}) .
\end{split}
\ee
As a result, we have $  \theta^{(ij)} = \frac{\pi}{2} + O (L^{-p}) $ and 
\be
\begin{split}
\cos(\theta^{(ij)} ) = \cos(\tilde{\theta}^{(ij)})+O(L^{-2p}).
\end{split}
\end{equation}
These readily imply 
\be \label{eq-theta-tilde-theta}
\theta^{(ij)} = \tilde{\theta}^{(ij)} + O(L^{-2p}).
\ee
Equation \eqref{eq-angle} follows from \eqref{eq-theta-tilde-theta} and \eqref{eq-alpha-theta}.
\end{proof}

We now prove Theorem \ref{thm-prop}. 
In what follows, we let $F^{(i)}=F^{(i)}_+\cup F^{(i)}_-$, 
and let $E^{(ij)}= F^{(i)} \cap F^{(j)} $ for $ i \ne j $.

\begin{proof}[Proof of Theorem \ref{thm-prop}]
By  \eqref{eq-face}, we have
\begin{equation}\label{eq-int-H-2}
\begin{split}
& \ \int_{\F (\p C^n_L) } H d \sigma^{n-1} 
= \ \sum_i\int_{ F^{(i)} } H_i \, d \sigma^{n-1}  \\
= & \ \frac{1}{n-2}\sum_i\sum_{k \in \{ 1, \cdots, n \} \setminus \{ i \} }\int_{ F^{(i)} } \tilde{H}^{(k)}_i  
d\sigma^{n-1}_0 + \o \\
= & \ \frac{1}{n-2}\sum_k 
 \sum_{i \in \{ 1, \cdots, n \} \setminus \{k \} }
\int_{-L}^L \left\lbrace \int_{  F^{(i)}  \cap S_t^{(k)}} \tilde{H}^{(k)}_i  d\sigma^{n-2}_0 \right\rbrace dt+\o\\
= & \ \frac{1}{n-2}\sum_k\int_{-L}^L \left\lbrace \int_{S^{(k)}_t} \tilde{H}^{(k)} d \sigma^{n-2} \right \rbrace dt+\o.\\
\end{split}
\end{equation}
By \eqref{eq-angle}, we have 
\begin{equation}\label{eq-int-angle}
\begin{split}
& \ \int_{\E (\p C^n_L) }\(\alpha- \frac{\pi}{2} \right)d\mu^{n-2} 
=   \frac12 \sum_{i \neq j}\int_{E^{(ij)}}\( \tilde{\alpha}^{(k)} -\frac{\pi}{2}\right)d\mu^{n-2}_0 +\o \\
= & \ \frac{1}{2 (n-2)}  \sum_{ i \neq j } \, 
\sum_{ k \in \{ 1, \cdots, n \} \setminus \{  i,j \}   } \, 
\int_{-L}^L \left\lbrace 
\int_{E^{(ij)}\cap S_t^{(k)}}\( \tilde{\alpha}^{(k)}   -\frac{\pi}{2}\right)d\mu_0^{n-3}\right\rbrace dt +\o\\
= & \ \frac{1}{n-2} \sum_k\int_{-L}^L  \left\lbrace\int_{\mathcal{E}(S_t^{(k)})} 
\( \tilde{\alpha}^{(k)} -\frac{\pi}{2}\right)d\mu^{n-3}\right\rbrace dt +\o . 
\end{split}
\end{equation}
Taking $ L \to \infty$, we conclude from Theorem \ref{thm-main}, \eqref{eq-int-H-2} and \eqref{eq-int-angle} that 
\begin{equation}
\begin{split}
& \ (n-1)\omega_{n-1} \mathfrak{m} (g) \\
= & \ -\int_{ \F ( \partial C_L^n )} H d\sigma^{n-1} + \int_{\mathcal{E}(\partial C^n_L)}\( \alpha- \frac{\pi}{2}\right)d\mu^{n-2}
+ o(1) \\
= & \ \frac{1}{n-2}\sum_k \int_{-L}^L   \left\lbrace -\int_{ \F ( S^{(k)}_t ) } \tilde{H}^{(k)} d\sigma^{n-2}  + \int_{\mathcal{E}(S_t^{(k)})}\( \tilde\alpha^{(k)}  - \frac{\pi}{2}\right)d\mu^{n-3} \right\rbrace dt + o(1) \\
= & \   \omega_{n-2} \sum_k \int_{-L}^L   \m_k^{ (n-1)} (t, L)   \, dt + o(1) .
\end{split} 
\end{equation}
This completes the proof.
\end{proof}

\bigskip


\begin{thebibliography}{10}

\bibitem{ADM} 
R. Arnowitt; S. Deser, and C. W. Misner, {\sl Coordinate invariance and energy expressions in general relativity}, Phys. Rev., {\bf 122} (1961), no. 3, 997--1006.

\bibitem{Bartnik86} 
R. Bartnik, {\sl The mass of an asymptotically flat manifold}, Comm. Pure Appl. Math.  \textbf{39} (1986), no. 5, 661--693.

\bibitem{BKKS19}
H. Bray, D. Kazaras, M. Khuri and D. Stern, 
{\it Harmonic functions and the mass of $3$-dimensional asymptotically flat Riemannian manifolds}, 
arXiv:1911.06754.

\bibitem{CL2019} A. Carlotto and C. De Lellis, \textit{Min-max embedded geodesic lines in asymptotically conical surfaces}, J. Differential Geom.\textbf{ 112} (2019), no. 3, 411–445.

\bibitem{Chr} 
P. Chru\'sciel, \textit{ Lectures on energy in general relativity}, preprint. [manuscipt available on \href{http://homepage.univie.ac.at/piotr.chrusciel/teaching/Energy/Energy.pdf}{http://homepage.univie.ac.at/piotr.chrusciel/teaching/Energy/Energy.pdf}]

\bibitem{Chrusciel}
P. Chru\'sciel,  {\sl Boundary conditions at spatial infinity from a Hamiltonian point of view},
Topological Properties and Global Structure of Space-Time, Plenum Press, New York, (1986), 49--59.


\bibitem{Gromov14}
M. Gromov, {\sl Dirac and Plateau billiards in domains with corners,} Cent. Eur. J. Math. \textbf{12} (2014), 
no. 8, 1109--1156.

\bibitem{Gromov18a}
M. Gromov, {\sl A dozen problems, questions and conjectures about positive scalar curvature}, 
Foundations of mathematics and physics one century after Hilbert, Springer, Cham, 2018, 135--158.


\bibitem{Li17}
C. Li,  {\it A polyhedron comparison theorem for three-manifolds with positive scalar curvature},
Invent. Math.  \textbf{219} (2020), 1--37.

\bibitem{Li20}
C. Li, {\sl The dihedral rigidity conjecture for $n$-prisms}, arXiv:1907.03855.


\bibitem{Lo3}
J. Lohkamp, {\sl Skin structures in scalar curvature geometry}, arXiv:1512.08252.


\bibitem{Miao02}
P. Miao, {\sl On existence of static metric extensions in general relativity}, Commun. Math. Phys. \textbf{241} (2003), 
no. 1, 27--46.

\bibitem{Miao19}
P. Miao,  {\sl Measuring mass via coordinate cubes},
Commun. Math. Phys. \textbf{379} (2020), 773--783.

\bibitem{MT09}
P.  Miao and L.-F. Tam, {\sl On the volume functional of compact manifolds with boundary with constant scalar curvature},  
Calc. Var. Part. Diff. Eq. \textbf{36}  (2009), no. 2, 141--171. 

\bibitem{SchoenYau79} 
R. Schoen and S.-T.  Yau,
{\sl On the proof of the positive mass conjecture in general relativity},
Commun.  Math. Phys. \textbf{65} (1979), no. 1, 45--76.

\bibitem{SchoenYau17} 
R. Schoen and S.-T.  Yau,
{\sl Positive scalar curvature and minimal hypersurfaces singularities}, 2017, arXiv:1704.05490.

\bibitem{Stern19}
D. Stern, {\sl Scalar curvature and harmonic maps to $\mathbb{S}^1$}, to appear in J. Diff. Geom., arXiv:1908.09754.

\bibitem{Witten81} 
E. Witten, 
{\sl A new proof of the positive energy theorem},
Commun.  Math. Phys. \textbf{80}  (1981), no. 3, 381--402.

\end{thebibliography}
\end{document}